\newtheorem{thm}{Theorem}
\newtheorem{lem}[thm]{Lemma}
\newdefinition{rmk}{Remark}
\newproof{pf}{Proof}
\newdefinition{example}{Example}
\newdefinition{definition}{Definition}
\newdefinition{proposition}{Proposition}
\newdefinition{problem}{Problem}
\newdefinition{corollary}{Corollary}
\journal{Arkiv}
\begin{document}

\begin{frontmatter}



\title{An $S$-type eigenvalue localization set for tensors}

\author[rvt]{Chaoqian Li}
\ead{lichaoqian@ynu.edu.cn}

\author[rvt]{Aiquan Jiao}
\ead{jaq1029@163.com}
\author[rvt]{Yaotang Li\corref{cor1}}
\ead{liyaotang@ynu.edu.cn}

\cortext[cor1]{Corresponding author.}


\address[rvt]{School of Mathematics and Statistics, Yunnan
University, Yunnan,  P. R. China 650091}

\begin{abstract}
An $S$-type eigenvalue localization set for a tensor is given by
breaking $N=\{1,2,\cdots,n\}$ into disjoint subsets $S$ and its
complement. It is shown that the new set is tighter than those
provided by L. Qi (Journal of Symbolic Computation 40 (2005)
1302-1324) and Li et al. (Numer. Linear Algebra Appl. 21 (2014)
39-50). As applications of the results, a checkable sufficient
condition for the positive definiteness of tensors and a checkable
sufficient condition of the positive semi-definiteness of tensors
are given.

\end{abstract}

\begin{keyword}
 Tensor eigenvalue, Localization set, Positive definite,
Positive semi-definite
\MSC[2010] 15A18, 15A69  
\end{keyword}

\end{frontmatter}


\section{Introduction}
Eigenvalue problems of tensors have become an important topic of
study in numerical multilinear algebra, and they have a wide range
of practical applications; see
\cite{Bo,Ch,Ch1,Di,Hu,Ju,Ko1,Ku,Li,Liu,Ng,Ni,Qi1,Qi5,Wa,Wa1}. Here
we call $\mathcal {A}=(a_{i_1\cdots i_m})$ a complex (real) tensor
of order $m$ dimension $n$, denoted by $\mathcal {A}\in C^{[m,n]}~
(R^{[m,n]})$, if
\[a_{i_1\cdots i_m}\in C~ (R),\] where $i_j=1,\ldots,n$ for
$j=1,\ldots, m$. Moreover, if there are a complex number $\lambda$
and a nonzero complex vector $x=(x_1,x_{2},\ldots,x_n)^T$ such that
\[\mathcal {A}x^{m-1}=\lambda x^{[m-1]},\]
then $\lambda$ is called an eigenvalue of $\mathcal {A}$ and $x$ an
eigenvector of $\mathcal {A}$ associated with $\lambda$
\cite{Lim,Qi}, where $\mathcal {A}x^{m-1}$ is an $n$ dimension
vector whose $i$th component is
\[(\mathcal {A}x^{m-1})_i=\sum\limits_{i_2,\ldots,i_m\in N}
a_{ii_2\cdots i_m}x_{i_2}\cdots x_{i_m} ~(N =\{1,2,\ldots,n\})\] and
\[ x^{[m-1]}=(x_1^{m-1},x_2^{m-1},\ldots,x_n^{m-1})^T.\]
 If $x$ and $\lambda$ are all real, then
$\lambda$ is called an H-eigenvalue of $\mathcal {A}$ and $x$ an
H-eigenvector of $\mathcal {A}$ associated with $\lambda$
\cite{Qi,Qi1,Ya1}.

One of many practical applications of eigenvalues of tensors is that
one can identify the positive (semi-)definiteness for an even-order
real symmetric tensor by using the smallest H-eigenvalue of a
tensor, consequently, can identify the positive (semi-)definiteness
of the multivariate homogeneous polynomial determined by this
tensor, for details, see \cite{Li,Qi}.

Because it is not easy to compute the smallest H-eigenvalue of
tensors when the order and dimension are  large, ones always try to
give a set including all eigenvalues in the complex
\cite{Qi,Li1,Li2,Li3}. In particular, if this set for an even-order
real symmetric tensor is in the right-half complex plane, then we
can conclude that the smallest H-eigenvalue is positive,
consequently, the corresponding tensor is positive definite.

In \cite{Qi}, Qi gave an eigenvalue localization set for  real
symmetric tensors, which is a generalization of the well-known
Ger\v{s}gorin's eigenvalue localization set of matrices
\cite{Ger,Va}. This result can be easily generalized to general
tensors \cite{Li1,Ya}.

\begin{thm}\cite{Li1,Qi,Ya}
\label{Qi-Li} Let $\mathcal {A}=(a_{i_1\cdots i_m})\in C^{[m,n]}$.
Then
\[
\sigma(\mathcal {A})\subseteq \Gamma(\mathcal
{A}):=\bigcup\limits_{i\in N} \Gamma_i(\mathcal {A}),
\]
where $\sigma (\mathcal {A})$ is the set of all the eigenvalues of
$\mathcal{A}$, \[\Gamma_i(\mathcal {A})=\left\{z\in
\mathbb{C}:|z-a_{i\cdots i}|\leq r_i(\mathcal
{A})\right\},~r_i(\mathcal {A})= \sum\limits_{i_2,\ldots,i_m\in
N,\atop \delta_{ii_2\ldots i_m}=0} |a_{ii_2\cdots i_m}|\] and
\[\delta_{i_1\cdots i_m}=\left\{\begin{array}{cl}
   1,   &if~ i_1=\cdots =i_m,  \\
   0,   &otherwise.
\end{array}
\right.\]
\end{thm}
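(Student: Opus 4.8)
The plan is to adapt the classical proof of the Ger\v{s}gorin disc theorem to the multilinear setting. Let $\lambda\in\sigma(\mathcal{A})$ and let $x=(x_1,\ldots,x_n)^T\neq 0$ be an associated eigenvector, so that $\mathcal{A}x^{m-1}=\lambda x^{[m-1]}$. First I would pick an index $t\in N$ at which $|x|$ attains its maximum, i.e.\ $|x_t|=\max_{i\in N}|x_i|$; since $x\neq 0$, we have $|x_t|>0$, and this inequality is what makes the final step work.

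Next, I would write out the $t$-th component of the eigenvalue equation,
\[
\sum_{i_2,\ldots,i_m\in N} a_{ti_2\cdots i_m} x_{i_2}\cdots x_{i_m} = \lambda x_t^{m-1},
\]
and isolate the purely diagonal term $a_{t\cdots t}x_t^{m-1}$ to obtain
\[
(\lambda-a_{t\cdots t})\,x_t^{m-1} = \sum_{i_2,\ldots,i_m\in N,\ \delta_{ti_2\cdots i_m}=0} a_{ti_2\cdots i_m}\, x_{i_2}\cdots x_{i_m}.
\]
Taking moduli, applying the triangle inequality, and using $|x_{i_j}|\leq |x_t|$ for every $j=2,\ldots,m$, the right-hand side is bounded above by $\left(\sum_{\delta_{ti_2\cdots i_m}=0}|a_{ti_2\cdots i_m}|\right)|x_t|^{m-1}=r_t(\mathcal{A})\,|x_t|^{m-1}$.

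Finally, dividing both sides by $|x_t|^{m-1}>0$ gives $|\lambda-a_{t\cdots t}|\leq r_t(\mathcal{A})$, i.e.\ $\lambda\in\Gamma_t(\mathcal{A})\subseteq\Gamma(\mathcal{A})$; since $\lambda$ was an arbitrary eigenvalue, $\sigma(\mathcal{A})\subseteq\Gamma(\mathcal{A})$. I do not expect a genuine obstacle here: the argument is a direct generalization of the matrix case, and the only points needing care are that the maximal component $x_t$ is nonzero (so the division is legitimate) and that the diagonal entry $a_{t\cdots t}$ is excluded from the sum defining $r_t(\mathcal{A})$ exactly via the condition $\delta_{ti_2\cdots i_m}=0$.
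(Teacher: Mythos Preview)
Your argument is correct and is precisely the standard Ger\v{s}gorin-type proof for tensors: pick a coordinate of maximal modulus in an eigenvector, isolate the diagonal term in that row of the eigenvalue equation, and bound the off-diagonal sum. Note, however, that the paper does not supply its own proof of this theorem; it is quoted as a known result from \cite{Li1,Qi,Ya}, so there is no in-paper argument to compare against, but your proof matches the one given in those references.
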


Although it is easy to get $\Gamma(\mathcal {A})$ in the complex by
computing $n$ sets $\Gamma_i(\mathcal {A})$, $\Gamma(\mathcal {A})$
does'nt always capture all eigenvalues of $\mathcal {A}$ very
precisely. To obtain tighter sets than $\Gamma(\mathcal {A})$, Li et
al. \cite{Li1} extended the Brauer's eigenvalue localization set of
matrices \cite{brauer,Va} and gave the following Brauer-type
eigenvalue localization set for tensors.

\begin{thm}\cite{Li1}
\label{Li-Li}
 Let $\mathcal {A}=(a_{i_1\cdots i_m})\in C^{[m,n]}$, $n\geq 2$. Then
\[
\sigma(\mathcal {A})\subseteq \mathcal{K}(\mathcal {A}):=
\bigcup\limits_{i,j\in N,\atop j\neq i} \mathcal {K}_{i,j}(\mathcal
{A}),
\]
where
\[\mathcal {K}_{i,j}(\mathcal {A})=\left\{z\in
\mathbb{C}:\left(|z-a_{i\cdots i}|-r_i^j(\mathcal
{A})\right)|z-a_{j\cdots j}|\leq |a_{ij\cdots j}|r_j(\mathcal
{A})\right\}\] and
\[r_i^j(\mathcal
{A})=\sum\limits_{\delta_{i,i_2,\ldots, i_m}=0,\atop
\delta_{j,i_2,\ldots, i_m}=0} |a_{ii_2\cdots i_m}|=r_i(\mathcal
{A})-|a_{ij\cdots j}|.\] Furthermore, $\mathcal{K}(\mathcal {A})
\subseteq \Gamma(\mathcal {A})$.
\end{thm}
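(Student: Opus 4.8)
The plan is to adapt the standard proof of Brauer's ``ovals of Cassini'' theorem for matrices to the multilinear setting, working directly from the eigenvalue equations $(\mathcal{A}x^{m-1})_i = \lambda x_i^{m-1}$. Given $\lambda \in \sigma(\mathcal{A})$ with eigenvector $x = (x_1,\ldots,x_n)^T \neq 0$, I would pick indices $p, q \in N$ with $p \neq q$ (possible since $n \geq 2$) such that $|x_p| \geq |x_q| \geq |x_k|$ for every $k \in N \setminus \{p,q\}$, i.e. $|x_p|$ and $|x_q|$ are the two largest moduli among the entries of $x$, and aim to show $\lambda \in \mathcal{K}_{p,q}(\mathcal{A})$. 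The degenerate case $x_q = 0$ is immediate: then $x_p$ is the only nonzero entry, the $p$-th equation collapses to $a_{p\cdots p}x_p^{m-1} = \lambda x_p^{m-1}$ and forces $\lambda = a_{p\cdots p}$, and since $r_p^{j}(\mathcal{A}) \geq 0$ the defining inequality of $\mathcal{K}_{p,q}(\mathcal{A})$ holds with left-hand side $\leq 0$. So assume $|x_p| \geq |x_q| > 0$.

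For the main case, in the $p$-th equation I would move the diagonal term $a_{p\cdots p}x_p^{m-1}$ to the left and then split off from the remaining sum the single term indexed by $(i_2,\ldots,i_m) = (q,\ldots,q)$, namely $a_{pq\cdots q}x_q^{m-1}$; the multi-indices that then remain are exactly those with $\delta_{pi_2\cdots i_m} = 0$ and $\delta_{qi_2\cdots i_m} = 0$. Applying the reverse triangle inequality on the left and bounding each $|x_{i_2}\cdots x_{i_m}|$ by $|x_p|^{m-1}$ on the right gives
\[
\bigl(|\lambda - a_{p\cdots p}| - r_p^{q}(\mathcal{A})\bigr)\,|x_p|^{m-1} \;\leq\; |a_{pq\cdots q}|\,|x_q|^{m-1}. \qquad (\ast)
\]
Likewise, from the $q$-th equation, using $|x_{i_j}| \leq |x_p|$ throughout,
\[
|\lambda - a_{q\cdots q}|\,|x_q|^{m-1} \;\leq\; r_q(\mathcal{A})\,|x_p|^{m-1}. \qquad (\ast\ast)
\]
If $|\lambda - a_{p\cdots p}| - r_p^{q}(\mathcal{A}) < 0$ then $\lambda \in \mathcal{K}_{p,q}(\mathcal{A})$ trivially, exactly as in the degenerate case; otherwise all four sides of $(\ast)$ and $(\ast\ast)$ are nonnegative, so I can multiply the two inequalities termwise and cancel the positive factor $|x_p|^{m-1}|x_q|^{m-1}$ to obtain $\bigl(|\lambda - a_{p\cdots p}| - r_p^{q}(\mathcal{A})\bigr)|\lambda - a_{q\cdots q}| \leq |a_{pq\cdots q}|\,r_q(\mathcal{A})$, that is, $\lambda \in \mathcal{K}_{p,q}(\mathcal{A}) \subseteq \mathcal{K}(\mathcal{A})$.

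I expect the only delicate point to be this sign bookkeeping: the scalar $|\lambda - a_{p\cdots p}| - r_p^{q}(\mathcal{A})$ may be negative, so the passage from $(\ast)$ and $(\ast\ast)$ to the product inequality must be split into cases, and one also has to verify that the degenerate shapes of $x$ (a single nonzero entry, or ties among the moduli $|x_k|$) are genuinely covered by the choice of $p$ and $q$; none of this is hard, but it must be said carefully.

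For the final inclusion $\mathcal{K}(\mathcal{A}) \subseteq \Gamma(\mathcal{A})$, let $z \in \mathcal{K}_{i,j}(\mathcal{A})$. If $z \in \Gamma_j(\mathcal{A})$ there is nothing to prove, so suppose $|z - a_{j\cdots j}| > r_j(\mathcal{A}) \geq 0$. Dividing the defining inequality of $\mathcal{K}_{i,j}(\mathcal{A})$ by the positive number $|z - a_{j\cdots j}|$ gives $|z - a_{i\cdots i}| - r_i^{j}(\mathcal{A}) \leq |a_{ij\cdots j}|\,r_j(\mathcal{A})/|z - a_{j\cdots j}| < |a_{ij\cdots j}|$, using $r_j(\mathcal{A})/|z - a_{j\cdots j}| < 1$ when $r_j(\mathcal{A}) > 0$ and the bound being trivial when $r_j(\mathcal{A}) = 0$. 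Hence $|z - a_{i\cdots i}| < r_i^{j}(\mathcal{A}) + |a_{ij\cdots j}| = r_i(\mathcal{A})$, so $z \in \Gamma_i(\mathcal{A}) \subseteq \Gamma(\mathcal{A})$, which completes the argument.
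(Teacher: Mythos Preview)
Your argument is correct and is exactly the standard Brauer-type proof one expects: pick the two largest-modulus coordinates of an eigenvector, derive the pair of inequalities $(\ast)$ and $(\ast\ast)$ from the corresponding eigenvalue equations, and multiply. The sign case-split and the degenerate case $x_q=0$ are handled properly, and your proof of $\mathcal{K}(\mathcal{A})\subseteq\Gamma(\mathcal{A})$ is the usual one-line computation.

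Note, however, that the present paper does \emph{not} actually prove this theorem: it is quoted as a known result from \cite{Li1} and used only as background for the new set $\Omega^S(\mathcal{A})$. So there is no ``paper's own proof'' to compare against here. Your proposal matches the argument given in the original reference \cite{Li1} (and in fact the paper's proof of Theorem~\ref{nth2.1} visibly reuses the same mechanism of deriving two inequalities from coordinates of maximal modulus and multiplying them), so you are in line with the intended approach.
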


As Theorem \ref{Li-Li} shows, we need compute $n(n-1)$ sets
$\mathcal {K}_{i,j}(\mathcal {A})$ to give the set
$\mathcal{K}(\mathcal {A})$, however $\mathcal{K}(\mathcal {A})$
captures all eigenvalues of $\mathcal {A}$ more precisely than
$\Gamma(\mathcal {A})$. To reduce computations,  Li et al. give an
$S$-type eigenvalue localization set by breaking $N$ into disjoint
subsets $S$ and $\bar{S}$, where $\bar{S}$ is the complement of $S$
in $N$.

\begin{thm} \cite[Theorem 2.2]{Li1}
\label{Li-Li1} Let $\mathcal {A}=(a_{i_1\cdots i_m})\in C^{[m,n]}$,
$n\geq 2$, and $S$ be a nonempty proper subset of $N$. Then
\[
\sigma(\mathcal {A})\subseteq \mathcal{K}^S (\mathcal
{A}):=\left(\bigcup\limits_{i\in S,\atop j\neq \bar{S}} \mathcal
{K}_{i,j}(\mathcal {A})\right) \bigcup \left( \bigcup\limits_{i\in
\bar{S},\atop j\neq S} \mathcal {K}_{i,j}(\mathcal {A})\right).
\]
\end{thm}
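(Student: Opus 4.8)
The strategy is to adapt the proof of Theorem~\ref{Li-Li}, the only real change being a more careful choice of the second index so that it lies in the block complementary to the one containing the dominant coordinate. Let $\lambda\in\sigma(\mathcal{A})$ and let $x=(x_1,\ldots,x_n)^T\neq 0$ satisfy $\mathcal{A}x^{m-1}=\lambda x^{[m-1]}$. Choose $p\in N$ with $|x_p|=\max_{k\in N}|x_k|$; since $x\neq 0$ we have $|x_p|>0$. Exactly one of $p\in S$, $p\in\bar S$ holds, and by symmetry (interchange $S$ and $\bar S$) we may assume $p\in S$. Since $S$ is a proper subset of $N$, $\bar S\neq\emptyset$, so we may choose $q\in\bar S$ with $|x_q|=\max_{k\in\bar S}|x_k|$; then $q\neq p$, so $\delta_{pq\cdots q}=0$.

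Next I would extract two scalar inequalities. Taking the $p$-th component of $\mathcal{A}x^{m-1}=\lambda x^{[m-1]}$, moving the diagonal term to the left and isolating the term with $i_2=\cdots=i_m=q$, we get
\[
(\lambda-a_{p\cdots p})\,x_p^{m-1}=a_{pq\cdots q}\,x_q^{m-1}+\sum_{\delta_{p i_2\cdots i_m}=0,\ \delta_{q i_2\cdots i_m}=0}a_{p i_2\cdots i_m}\,x_{i_2}\cdots x_{i_m};
\]
taking moduli and using $|x_{i_2}\cdots x_{i_m}|\leq|x_p|^{m-1}$ (because $|x_p|$ is the largest coordinate) gives
\[
\big(\,|\lambda-a_{p\cdots p}|-r_p^q(\mathcal{A})\,\big)\,|x_p|^{m-1}\ \leq\ |a_{pq\cdots q}|\,|x_q|^{m-1},
\]
which I label $(\star)$. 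The same computation on the $q$-th component, isolating nothing this time, gives
\[
|\lambda-a_{q\cdots q}|\,|x_q|^{m-1}\ \leq\ r_q(\mathcal{A})\,|x_p|^{m-1},
\]
which I label $(\star\star)$.

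Then I would combine $(\star)$ and $(\star\star)$. If $|x_q|>0$, multiplying them and cancelling the strictly positive factor $|x_p|^{m-1}|x_q|^{m-1}$ gives
\[
\big(\,|\lambda-a_{p\cdots p}|-r_p^q(\mathcal{A})\,\big)\,|\lambda-a_{q\cdots q}|\ \leq\ |a_{pq\cdots q}|\,r_q(\mathcal{A}),
\]
i.e.\ $\lambda\in\mathcal{K}_{p,q}(\mathcal{A})$ with $p\in S$ and $q\in\bar S$. If instead $|x_q|=0$, then $(\star)$ becomes $\big(|\lambda-a_{p\cdots p}|-r_p^q(\mathcal{A})\big)|x_p|^{m-1}\leq 0$, so $|\lambda-a_{p\cdots p}|\leq r_p^q(\mathcal{A})$, whence the left-hand side of the inequality defining $\mathcal{K}_{p,q}(\mathcal{A})$ is $\leq 0$ while its right-hand side is $\geq 0$; again $\lambda\in\mathcal{K}_{p,q}(\mathcal{A})$. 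In both subcases $\lambda\in\mathcal{K}^S(\mathcal{A})$, and the case $p\in\bar S$ (choose $q\in S$) is identical and places $\lambda$ in the second union defining $\mathcal{K}^S(\mathcal{A})$. This proves the inclusion.

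The single delicate point — and the reason this is not verbatim the proof of Theorem~\ref{Li-Li} — is the choice of $q$: there it is the position of a second-largest coordinate and may fall in either block, whereas here it must be forced into the block not containing $p$, so $|x_q|$ may be strictly smaller, possibly $0$; the degenerate subcase $|x_q|=0$ is exactly what the extra line above takes care of. Everything else is the same triangle-inequality estimate already used for Theorems~\ref{Qi-Li} and~\ref{Li-Li}.
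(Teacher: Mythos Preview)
Your proof is correct. The paper does not itself prove Theorem~\ref{Li-Li1}---it is quoted from \cite{Li1} without argument---so there is no in-paper proof to compare against; your approach is the standard one and mirrors exactly the technique the present paper uses in its own proof of Theorem~\ref{nth2.1} (pick the coordinate of largest modulus, force the companion index into the complementary block, extract two scalar inequalities and combine, handling the degenerate case $|x_q|=0$ separately).
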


The set $\mathcal{K}^S (\mathcal {A})$ in Theorem \ref{Li-Li1}
consists of $2|S|(n-|\bar{S}|)$ sets $\mathcal {K}_{i,j}(\mathcal
{A})$, where $|S|$ is the cardinality of $S$. It is obvious that
$2|S|(n-|\bar{S}|) \leq n(n-1),$ and then \begin{equation}
\label{com1} \mathcal{K}^S (\mathcal {A}) \subseteq
\mathcal{K}(\mathcal {A}) \subseteq \Gamma(\mathcal
{A}),\end{equation} for details, see \cite{Li1}. In this paper, by
the technique in \cite{Li1} we  give a new eigenvalue localization
set involved with a proper subset $S$ of $N$, and prove that the new
set  is tighter than $\Gamma(\mathcal {A}), \mathcal{K}(\mathcal
{A})$ and $\mathcal{K}^S (\mathcal {A})$. As an application, we give
some checkable sufficient conditions for the positive
(semi-)definiteness of tensors.

\section{A new $S$-type eigenvalue localization set}\vspace{-2pt}
we begin with some notation. Given an nonempty proper subset $S$ of
$N$, we denote
\[ \Delta^N :=\{(i_2,i_3,\cdots,i_m): each ~i_j\in N ~for ~j=2,\cdots, m \},\]
\[ \Delta^S :=\{(i_2,i_3,\cdots,i_m): each ~i_j\in S ~for~ j=2,\cdots, m \},\]
and then
\[ \overline{\Delta^S}= \Delta^N  \backslash \Delta^S.\]
This implies that for a tensor $\mathcal {A}=(a_{i_1\cdots i_m})\in
C^{[m,n]},$ we have that for $i\in S$, \[r_i(\mathcal {A})=
r_i^{\Delta^S}(\mathcal {A})+r_i^{\overline{\Delta^S}}(\mathcal
{A}),~r_i^j(\mathcal {A})= r_i^{\Delta^S}(\mathcal
{A})+r_i^{\overline{\Delta^S}}(\mathcal {A})-|a_{ij\cdots
j}|,\]where \[ r_i^{\Delta^S}(\mathcal
{A})=\sum\limits_{(i_2,\cdots,i_m)\in \Delta^S,
 \atop \delta_{ii_2\cdots i_m}=0} |a_{ii_2\cdots i_m}|,
 r_i^{\overline{\Delta^S}}(\mathcal {A})=\sum\limits_{(i_2,\cdots,i_m)\in \overline{\Delta^S}} |a_{ii_2\cdots
 i_m}|\]

\begin{thm}
\label{nth2.1} Let $\mathcal {A}=(a_{i_1\cdots i_m})\in C^{[m,n]}$,
$n\geq 2$, and $S$ be a nonempty proper subset of $N$. Then
\[
\sigma(\mathcal {A})\subseteq \Omega^S(\mathcal {A}):=\left(
\bigcup\limits_{i\in S,\atop j\in \bar{S}} \Omega_{i,j}^S(\mathcal
{A}) \right)\bigcup\left( \bigcup\limits_{i\in \bar{S},\atop j\in S}
\Omega_{i,j}^{\bar{S}}(\mathcal {A}) \right),\] where
\[\Omega_{i,j}^S(\mathcal
{A}) :=\left\{z\in \mathbb{C}:\left( |\lambda-a_{i\cdots i}|\right)
\left(|\lambda-a_{j\cdots j}|-r_j^{\overline{\Delta^{S}}}(\mathcal
{A})\right)\leq r_i(\mathcal {A}) r_j^{\Delta^{S}}(\mathcal {A})
\right\}\] and
\[\Omega_{i,j}^{\bar{S}}(\mathcal
{A}) :=\left\{z\in \mathbb{C}:\left( |\lambda-a_{i\cdots i}|\right)
\left(|\lambda-a_{j\cdots
j}|-r_j^{\overline{\Delta^{\bar{S}}}}(\mathcal {A})\right)\leq
r_i(\mathcal {A}) r_j^{\Delta^{\bar{S}}}(\mathcal {A}) \right\}.\]
\end{thm}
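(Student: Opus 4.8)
The plan is to follow the standard eigen-equation argument used for Brauer-type localization sets, adapted to the partition $N = S \cup \bar S$. Let $\lambda \in \sigma(\mathcal{A})$ with eigenvector $x = (x_1,\ldots,x_n)^T \neq 0$. First I would pick an index $t$ with $|x_t| = \max_{k\in N} |x_k| > 0$, and then a second index: if $t \in S$, choose $s \in \bar S$ with $|x_s| = \max_{k \in \bar S}|x_k|$; if $t \in \bar S$, choose $s \in S$ with $|x_s| = \max_{k\in S}|x_k|$. (One must handle the degenerate case where all coordinates outside the block containing $t$ vanish; in that case the relevant row sum term $r_s^{\Delta^S}$ type bound still goes through, or $\lambda \in \Gamma_t(\mathcal{A})$ directly, which is contained in the set.) By symmetry it suffices to treat the case $t \in S$, $s \in \bar S$, producing membership in $\Omega_{t,s}^S(\mathcal{A})$.

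The key computation is to write out the $t$-th and $s$-th components of $\mathcal{A}x^{m-1} = \lambda x^{[m-1]}$. From the $t$-th equation, $(\lambda - a_{t\cdots t}) x_t^{m-1} = \sum_{\delta_{ti_2\cdots i_m}=0} a_{ti_2\cdots i_m} x_{i_2}\cdots x_{i_m}$, and taking absolute values and bounding each $|x_{i_j}| \le |x_t|$ gives $|\lambda - a_{t\cdots t}|\,|x_t|^{m-1} \le r_t(\mathcal{A})\,|x_t|^{m-1}$, hence $|\lambda - a_{t\cdots t}| \le r_t(\mathcal{A})$ — but more importantly I keep the finer bound $|\lambda - a_{t\cdots t}|\,|x_t|^{m-1} \le r_t(\mathcal{A})\,|x_t|^{m-2}|x_s|$ is \emph{not} what we want; instead, the standard trick is to bound the $t$-equation crudely as $|\lambda - a_{t\cdots t}| \le r_t(\mathcal{A})$ would lose too much, so I actually want the $t$-equation to contribute $|\lambda-a_{t\cdots t}|\,|x_t|^{m-1}$ on one side. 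The right move: from the $s$-th equation, split the row sum $r_s$ into the part $r_s^{\Delta^S}$ with all indices $i_2,\ldots,i_m \in S$ and the part $r_s^{\overline{\Delta^S}}$ with at least one index outside $S$; in the first part every $|x_{i_j}| \le |x_t|$ (since $t$ realizes the global max), in the second part at least one factor is $\le |x_s|$ and the rest $\le |x_t|$... wait, that still needs care. Concretely: $|\lambda - a_{s\cdots s}|\,|x_s|^{m-1} \le r_s^{\Delta^S}(\mathcal{A})\,|x_t|^{m-1} + r_s^{\overline{\Delta^S}}(\mathcal{A})\,|x_s|\,|x_t|^{m-2}$, using that in the $\overline{\Delta^S}$ terms at least one index lies outside $S$ hence that coordinate is bounded by $|x_s| = \max_{\bar S}$, and all other coordinates by $|x_t|$. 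Rearranging yields $\big(|\lambda - a_{s\cdots s}| - r_s^{\overline{\Delta^S}}(\mathcal{A})\big)|x_s|^{m-1} \le r_s^{\Delta^S}(\mathcal{A})\,|x_t|^{m-1}$ after also using $|x_s|^{m-1} \le |x_s|\,|x_t|^{m-2}$ on the subtracted term — provided $|\lambda - a_{s\cdots s}| - r_s^{\overline{\Delta^S}}(\mathcal{A}) \ge 0$; if it is negative, one handles that case separately (then $\lambda$ lies in the set trivially or one swaps roles).

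Then I combine: multiply the bound $|\lambda - a_{t\cdots t}|\,|x_t|^{m-1} \le r_t(\mathcal{A})\,|x_t|^{m-1}$, i.e. $|\lambda - a_{t\cdots t}| \le r_t(\mathcal{A})$, rewritten as $|\lambda - a_{t\cdots t}|\,|x_s|^{m-1} \le r_t(\mathcal{A})\,|x_s|^{m-1}$, with the $s$-inequality $\big(|\lambda - a_{s\cdots s}| - r_s^{\overline{\Delta^S}}(\mathcal{A})\big)|x_s|^{m-1} \le r_s^{\Delta^S}(\mathcal{A})\,|x_t|^{m-1}$. Multiplying these two (after arranging so the $|x_t|^{m-1}|x_s|^{m-1}$ factors cancel) gives $|\lambda - a_{t\cdots t}|\big(|\lambda - a_{s\cdots s}| - r_s^{\overline{\Delta^S}}(\mathcal{A})\big) \le r_t(\mathcal{A})\,r_s^{\Delta^S}(\mathcal{A})$, which is exactly the condition $\lambda \in \Omega_{t,s}^S(\mathcal{A})$. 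The case $t \in \bar S$ is identical with $S$ and $\bar S$ interchanged, landing $\lambda$ in $\Omega_{s,t}^{\bar S}(\mathcal{A})$. Therefore $\sigma(\mathcal{A}) \subseteq \Omega^S(\mathcal{A})$.

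The main obstacle, as flagged above, is the bookkeeping of the degenerate/boundary cases: (i) when the second-largest-block maximum $|x_s|$ is zero (all mass concentrated in one block), and (ii) when $|\lambda - a_{s\cdots s}| < r_s^{\overline{\Delta^S}}(\mathcal{A})$ so the rearrangement changes sign. In case (i) one must argue directly that $\lambda \in \Gamma_t(\mathcal{A}) \subseteq \Omega_{t,j}^S(\mathcal{A})$ for any $j \in \bar S$ (since then the left factor $|\lambda - a_{t\cdots t}| \le r_t(\mathcal{A})$ and one checks the defining inequality holds, or observes $\Gamma_t \subseteq \Omega_{t,j}^S$); in case (ii) the point automatically satisfies the defining inequality of $\Omega_{t,s}^S$ because its left-hand side is $\le 0 \le$ right-hand side. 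Getting the exponents and the chain of $|x_s|^{m-1} \le |x_s|\,|x_t|^{m-2}$ substitutions to line up so that exactly the stated inequality emerges is the only genuinely delicate part; everything else is the routine Gershgorin/Brauer template.
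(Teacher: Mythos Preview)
Your template is right, but you have the two indices in the wrong roles, and this makes both the rearrangement step and the multiplication step fail. When $t\in S$ carries the global maximum and $s\in\bar S$ the $\bar S$-maximum, the point $\lambda$ does \emph{not} land in $\Omega_{t,s}^S(\mathcal A)$; it lands in $\Omega_{s,t}^{\bar S}(\mathcal A)$. In other words, the row that must be split is the row of the \emph{global} maximum, not the row of the secondary block maximum. Concretely, from your $s$-equation you obtain
\[
|\lambda-a_{s\cdots s}|\,|x_s|^{m-1}\le r_s^{\Delta^S}(\mathcal A)\,|x_t|^{m-1}+r_s^{\overline{\Delta^S}}(\mathcal A)\,|x_s|\,|x_t|^{m-2},
\]
and you want to deduce $\bigl(|\lambda-a_{s\cdots s}|-r_s^{\overline{\Delta^S}}(\mathcal A)\bigr)|x_s|^{m-1}\le r_s^{\Delta^S}(\mathcal A)\,|x_t|^{m-1}$. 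That would require $r_s^{\overline{\Delta^S}}|x_s||x_t|^{m-2}\le r_s^{\overline{\Delta^S}}|x_s|^{m-1}$, i.e.\ $|x_t|\le |x_s|$, which is exactly the opposite of your hypothesis. (Your remark ``using $|x_s|^{m-1}\le |x_s||x_t|^{m-2}$ on the subtracted term'' points the inequality the wrong way.) Even granting that step, your multiplication does not cancel: the left side carries $|x_s|^{2(m-1)}$ while the right side carries $|x_s|^{m-1}|x_t|^{m-1}$, leaving an unwanted factor $(|x_t|/|x_s|)^{m-1}\ge 1$ on the right.

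The paper's proof fixes this by splitting the $t$-th row instead. Since every multi-index in $\Delta^{\bar S}$ has all components in $\bar S$, those terms are bounded by $|x_s|^{m-1}$; the remaining $\overline{\Delta^{\bar S}}$ terms are bounded by the global maximum $|x_t|^{m-1}$ with no loss. This gives $\bigl(|\lambda-a_{t\cdots t}|-r_t^{\overline{\Delta^{\bar S}}}(\mathcal A)\bigr)|x_t|^{m-1}\le r_t^{\Delta^{\bar S}}(\mathcal A)\,|x_s|^{m-1}$, while the crude $s$-row bound gives $|\lambda-a_{s\cdots s}|\,|x_s|^{m-1}\le r_s(\mathcal A)\,|x_t|^{m-1}$. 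Now the $|x_s|^{m-1}|x_t|^{m-1}$ factors genuinely cancel upon multiplication, yielding $\lambda\in\Omega_{s,t}^{\bar S}(\mathcal A)$. Your degenerate case (i) also inherits this reversal: when $x_k=0$ for all $k\in\bar S$, the correct conclusion is $|\lambda-a_{t\cdots t}|\le r_t^{\overline{\Delta^{\bar S}}}(\mathcal A)$, hence $\lambda\in\Omega_{j,t}^{\bar S}(\mathcal A)$ for every $j\in\bar S$; the inclusion $\Gamma_t\subseteq\Omega_{t,j}^S$ you suggest is not true in general. Once you swap the roles of $t$ and $s$ in the splitting, your outline becomes exactly the paper's argument.
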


\begin{proof} For any $\lambda\in \sigma(\mathcal {A})$, let $ x=(x_1,x_2,\ldots,x_n)^T\in
\mathbb{C}^n\backslash \{0\}$ be an associated eigenvector, i.e.,
\begin{equation}\label{eq 2.2}
\mathcal {A}x^{m-1}=\lambda x^{[m-1]}. \end{equation} Let
$|x_p|=\max\limits_{i\in S} |x_i| $ and $|x_q|=\max\limits_{i\in
\bar{S}} |x_i| $ . Then, at least one of $x_p$ and $x_q$ is nonzero.
We next divide into three cases to prove.

 Case I: $x_p x_q\neq 0$ and $|x_q|\geq |x_p|$, that is,
$|x_q|=\max\limits_{i\in N} |x_i| $.  By (\ref{eq 2.2}), we have
\[(\lambda-a_{q\cdots q})
x_q^{m-1}=\sum\limits_{(i_2,\cdots,i_m)\in \Delta^{S}} a_{qi_2\cdots
i_m}x_{i_2}\cdots x_{i_m}+ \sum\limits_{(i_2,\cdots,i_m)\in
\overline{\Delta^{S}}, \atop \delta_{qi_2\cdots i_m}=0}
a_{qi_2\cdots i_m}x_{i_2}\cdots x_{i_m}.
\] Taking modulus in the above equation and using the triangle
inequality gives
\begin{eqnarray*}
|\lambda-a_{q\cdots q}||x_q|^{m-1}&\leq &
\sum\limits_{(i_2,\cdots,i_m)\in \Delta^{S}} |a_{qi_2\cdots
i_m}||x_{i_2}|\cdots |x_{i_m}|+ \sum\limits_{(i_2,\cdots,i_m)\in
\overline{\Delta^{S}}, \atop
\delta_{qi_2\cdots i_m}=0} |a_{qi_2\cdots i_m}||x_{i_2}|\cdots |x_{i_m}|\nonumber\\
&\leq& \sum\limits_{(i_2,\cdots,i_m)\in \Delta^{S}} |a_{qi_2\cdots
i_m}||x_p|^{m-1}+ \sum\limits_{(i_2,\cdots,i_m)\in
\overline{\Delta^{S}}, \atop
\delta_{qi_2\cdots i_m}=0} |a_{qi_2\cdots i_m}||x_q|^{m-1}\\
&=& r_q^{\Delta^{S}}(\mathcal
{A})|x_p|^{m-1}+r_q^{\overline{\Delta^{S}}}(\mathcal
{A})|x_q|^{m-1}.\nonumber
\end{eqnarray*}
Hence, \begin{equation}\label{eq 3.2} \left( |\lambda-a_{q\cdots
q}|-r_q^{\overline{\Delta^{S}}}(\mathcal {A})\right)|x_q|^{m-1}\leq
r_q^{\Delta^{S}}(\mathcal {A})|x_p|^{m-1}.
\end{equation}
On the other hand, by (\ref{eq 2.2}), we also get that
\[(\lambda-a_{p\cdots p})x_p^{m-1}=\sum\limits_{i_2,\cdots,i_m\in N ,  \atop
\delta_{pi_2\cdots i_m}=0} a_{pi_2\cdots i_m} x_{i_2}\cdots
x_{i_m}\] and
\begin{equation}\label{eq3.2n} |\lambda-a_{p\cdots p}||x_p|^{m-1}\leq r_p(\mathcal
{A})|x_q|^{m-1}.
\end{equation}
Multiplying (\ref{eq 3.2}) with (\ref{eq3.2n}) gives
 \[ ( |\lambda-a_{p\cdots
p}|) \left( |\lambda-a_{q\cdots
q}|-r_q^{\overline{\Delta^{S}}}(\mathcal {A})\right)\leq
r_p(\mathcal {A}) r_q^{\Delta^{S}}(\mathcal {A}),\] which leads to
$\lambda \in  \Omega^{S}_{p,q}(\mathcal {A})\subseteq
\Omega^S(\mathcal {A}).$

Case II:  $x_p x_q\neq 0$ and $|x_p|\geq |x_q|$, that is,
$|x_p|=\max\limits_{i\in N} |x_i| $. Similar to the proof of Case I,
we can obtain that
\[  \left( |\lambda-a_{p\cdots
p}|-r_p^{\overline{\Delta^{\bar{S}}}}(\mathcal
{A})\right)|x_p|^{m-1}\leq r_p^{\Delta^{\bar{S}}}(\mathcal
{A})|x_q|^{m-1},
\] and
\[ |\lambda-a_{q\cdots
q}||x_q|^{m-1}\leq r_q(\mathcal {A})|x_p|^{m-1}.\] This gives
\[ \left( |\lambda-a_{q\cdots
q}| \right) \left( |\lambda-a_{p\cdots
p}|-r_p^{\overline{\Delta^{\bar{S}}}}(\mathcal {A})\right)\leq
 r_q(\mathcal
{A}) r_p^{\Delta^{\bar{S}}}(\mathcal {A}). \] Hence, $\lambda \in
\Omega^{\bar{S}}_{q,p}(\mathcal {A}) \subseteq \Omega^S(\mathcal
{A}).$

Case III: $|x_p||x_q|= 0$, without loss of generality, let $|x_p|=0
$ and $|x_q |\neq 0$. Then by (\ref{eq 3.2}),
\[|\lambda-a_{q\cdots
q}|-r_q^{\overline{\Delta^{S}}}(\mathcal {A})\leq 0.\] hence for any
$i\in S$,
\[ \left( |\lambda-a_{i\cdots
i}|\right) \left( |\lambda-a_{q\cdots
q}|-r_q^{\overline{\Delta^{S}}}(\mathcal {A}) \right)\leq
r_i(\mathcal {A}) r_q^{\Delta^{S}}(\mathcal {A}),\] which leads to
$\lambda \in \Omega^{S}_{i,q}(\mathcal {A})\subseteq
\Omega^S(\mathcal {A}).$ The conclusion follows from Cases I, II and
III. \end{proof}

To compare the sets $\Gamma(\mathcal {A})$ in Theorem \ref{Qi-Li},
$\mathcal{K}(\mathcal {A})$ in Theorem \ref{Li-Li},
$\mathcal{K}^{S}(\mathcal {A})$ in Theorem \ref{Li-Li1} with
$\Omega^S(\mathcal {A})$ in Theorem \ref{nth2.1}, two lemmas in
\cite{Li2} are listed as follows.

\begin{lem}\cite[Lemmas 2.2 and 2.3 ]{Li2} \label{nle2.1}
(I) Let $a,b,c\geq 0$ and $d>0$. If $ \frac{a}{b+c+d}\leq 1$, then
\[ \frac{a-(b+c)}{d} \leq \frac{a-b}{c+d} \leq \frac{a}{b+c+d}.\]

(II) Let $a,b,c\geq 0$ and $d>0$. If $ \frac{a}{b+c+d}\geq 1$, then
\[ \frac{a-(b+c)}{d} \geq \frac{a-b}{c+d} \geq \frac{a}{b+c+d}.\]
\end{lem}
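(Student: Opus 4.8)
The plan is to observe that everything follows from clearing denominators, once we notice two things. First, since $d>0$ and $b,c\ge 0$, all three denominators $d$, $c+d$ and $b+c+d$ are strictly positive, so multiplying an inequality through by any of them (or by their products) preserves its direction. Second, the hypothesis in part (I), $\frac{a}{b+c+d}\le 1$, is nothing but $a\le b+c+d$, i.e. $a-(b+c+d)\le 0$, and in part (II) it is $a\ge b+c+d$, i.e. $a-(b+c+d)\ge 0$; this is the only role the hypothesis will play.

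For the right-hand inequality $\frac{a-b}{c+d}\le\frac{a}{b+c+d}$ of part (I), I would multiply both sides by the positive number $(c+d)(b+c+d)$ and expand; after cancelling the common term $a(c+d)$, the inequality is equivalent to $b\bigl(a-(b+c+d)\bigr)\le 0$, which is immediate since $b\ge 0$ and $a-(b+c+d)\le 0$. For the left-hand inequality $\frac{a-(b+c)}{d}\le\frac{a-b}{c+d}$, I would multiply by $d(c+d)>0$ and expand; after cancelling $(a-b)d$, it reduces to $c\bigl(a-(b+c+d)\bigr)\le 0$, again immediate from $c\ge 0$ and the hypothesis. Chaining the two gives the full chain of inequalities in (I).

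Part (II) is handled by the identical computation: the same algebra reduces the two claimed inequalities to $b\bigl(a-(b+c+d)\bigr)\ge 0$ and $c\bigl(a-(b+c+d)\bigr)\ge 0$ respectively, and now the hypothesis $a-(b+c+d)\ge 0$ together with $b,c\ge 0$ makes both hold, with every inequality in the statement reversed.

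I do not expect a genuine obstacle here; the lemma is elementary. The only point needing a little care is sign-bookkeeping when clearing denominators: one must multiply through by the manifestly positive quantities $(c+d)(b+c+d)$ and $d(c+d)$, not by the numerators $a-b$ or $a-(b+c)$ (which may be negative), so that the direction of the inequality is not disturbed. Since $d>0$ forces $c+d>0$ and $b+c+d>0$, this is automatic, and the reduction to $b(a-(b+c+d))$ and $c(a-(b+c+d))$ goes through cleanly.
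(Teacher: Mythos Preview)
Your argument is correct: clearing denominators by the positive quantities $(c+d)(b+c+d)$ and $d(c+d)$ and cancelling common terms reduces each inequality to $b\bigl(a-(b+c+d)\bigr)\le 0$ (resp.\ $\ge 0$) and $c\bigl(a-(b+c+d)\bigr)\le 0$ (resp.\ $\ge 0$), which follow at once from the hypothesis. The paper itself does not prove this lemma but merely cites it from \cite{Li2}, so there is no in-paper proof to compare against; your elementary verification is exactly what is needed.
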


\begin{thm} \label{com}
Let $\mathcal {A}=(a_{i_1\cdots i_m})\in C^{[m,n]}$, $n\geq 2$. And
let $S$ be a nonempty proper subset of $N$. Then
\[\Omega^S(\mathcal {A}) \subseteq \mathcal{K}^S(\mathcal {A})
\subseteq \mathcal{K}(\mathcal {A}) \subseteq \Gamma(\mathcal{A}).\]
\end{thm}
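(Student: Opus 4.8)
The plan is to prove the chain of inclusions from right to left, exploiting the fact that the last two inclusions, $\mathcal{K}^S(\mathcal {A}) \subseteq \mathcal{K}(\mathcal {A}) \subseteq \Gamma(\mathcal{A})$, are already established (the second of these is exactly the content of Theorem \ref{Li-Li} and equation (\ref{com1}), and the first follows from the observation made right after Theorem \ref{Li-Li1} that $\mathcal{K}^S(\mathcal {A})$ is a union of a \emph{sub}collection of the sets $\mathcal{K}_{i,j}(\mathcal {A})$ making up $\mathcal{K}(\mathcal {A})$). So the real work is the new inclusion $\Omega^S(\mathcal {A}) \subseteq \mathcal{K}^S(\mathcal {A})$, and by symmetry between the roles of $S$ and $\bar S$ it suffices to show that for every $i\in S$ and $j\in\bar S$ we have $\Omega_{i,j}^S(\mathcal {A}) \subseteq \mathcal{K}^S(\mathcal {A})$.

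First I would fix $i\in S$, $j\in\bar S$, and take $z\in\Omega_{i,j}^S(\mathcal {A})$, so that
\[
|z-a_{i\cdots i}|\,\bigl(|z-a_{j\cdots j}|-r_j^{\overline{\Delta^{S}}}(\mathcal {A})\bigr)\leq r_i(\mathcal {A})\, r_j^{\Delta^{S}}(\mathcal {A}).
\]
The key structural identities to keep in mind are $r_j(\mathcal {A})=r_j^{\Delta^S}(\mathcal {A})+r_j^{\overline{\Delta^S}}(\mathcal {A})$ and $r_j^i(\mathcal {A})=r_j(\mathcal {A})-|a_{ji\cdots i}|$, together with $r_j^{\Delta^S}(\mathcal{A})\ge |a_{ji\cdots i}|$ whenever $j\in\bar S$ and $i\in S$ (since the index tuple $(i,\dots,i)$ lies in $\Delta^S$ and is not the diagonal tuple for $j$). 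I would try to show $z\in\mathcal{K}_{i,j}(\mathcal {A})$ or $z\in\mathcal{K}_{j,i}(\mathcal {A})$, i.e. that one of the two Brauer-type inequalities holds; both of these sets appear in $\mathcal{K}^S(\mathcal {A})$ because $i\in S,j\in\bar S$. The natural route is a case split: if $|z-a_{i\cdots i}|$ is "small" (comparable to or below $r_i(\mathcal{A})$), one reads off $z\in\Gamma_i(\mathcal{A})\subseteq\mathcal{K}_{i,j}(\mathcal{A})$ directly; if it is large, one divides and uses Lemma \ref{nle2.1} with the substitution $a=|z-a_{j\cdots j}|$ (or a suitable rearrangement), $b,c$ the two pieces of the row sum being regrouped, and $d$ the remaining positive quantity, to convert the $\Omega$-inequality into the $\mathcal{K}$-inequality. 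Concretely I expect to want Lemma \ref{nle2.1}(I)/(II) applied to the pair $\bigl(r_j^{\overline{\Delta^S}}(\mathcal{A}),\, |a_{ji\cdots i}|,\, r_j^{\Delta^S}(\mathcal{A})-|a_{ji\cdots i}|\bigr)$ so as to relate $|z-a_{j\cdots j}|-r_j^{\overline{\Delta^S}}(\mathcal{A})$ to $|z-a_{j\cdots j}|-r_j^i(\mathcal{A})$ and $r_j^{\Delta^S}(\mathcal{A})$ to $|a_{ji\cdots i}|$.

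The main obstacle, and the place where care is needed, is handling the degenerate/boundary cases where denominators in Lemma \ref{nle2.1} vanish or the hypothesis $a/(b+c+d)\lessgtr 1$ flips — e.g. when $|z-a_{j\cdots j}| - r_j^{\overline{\Delta^S}}(\mathcal{A})\le 0$, or when $r_j(\mathcal{A})=0$, or when $|z-a_{i\cdots i}|=0$. In each such case one has to argue directly: for instance if $|z-a_{j\cdots j}|-r_j^{\overline{\Delta^S}}(\mathcal{A})\le 0$ then since $r_j^{\overline{\Delta^S}}(\mathcal{A})\le r_j^i(\mathcal{A})$ we get $|z-a_{j\cdots j}|\le r_j^i(\mathcal{A})$, which forces the defining inequality of $\mathcal{K}_{i,j}(\mathcal{A})$ to hold trivially (its left side is $\le 0$), hence $z\in\mathcal{K}_{i,j}(\mathcal{A})\subseteq\mathcal{K}^S(\mathcal{A})$. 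Once all these edge cases are disposed of, the generic case is a mechanical application of Lemma \ref{nle2.1} and multiplication of inequalities, exactly parallel to the comparison arguments in \cite{Li1,Li2}; I would then invoke the symmetric statement for $i\in\bar S$, $j\in S$ with $S$ replaced by $\bar S$ to finish $\Omega^S(\mathcal{A})\subseteq\mathcal{K}^S(\mathcal{A})$, and append the already-known inclusions $\mathcal{K}^S(\mathcal{A})\subseteq\mathcal{K}(\mathcal{A})\subseteq\Gamma(\mathcal{A})$ to complete the chain.
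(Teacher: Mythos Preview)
Your overall strategy matches the paper's: reduce via the $S\leftrightarrow\bar S$ symmetry to showing $\Omega_{p,q}^S(\mathcal{A})\subseteq\mathcal{K}^S(\mathcal{A})$ for $p\in S$, $q\in\bar S$, then convert the $\Omega$-inequality into a $\mathcal{K}$-inequality using Lemma~\ref{nle2.1}, with degenerate denominators handled separately. However, one step of your outline is false and the case split will not close as written.

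The assertion ``if $|z-a_{i\cdots i}|\le r_i(\mathcal{A})$ then $z\in\Gamma_i(\mathcal{A})\subseteq\mathcal{K}_{i,j}(\mathcal{A})$'' is wrong: in general $\Gamma_i(\mathcal{A})\not\subseteq\mathcal{K}_{i,j}(\mathcal{A})$ (the inclusion goes the other way, $\mathcal{K}(\mathcal{A})\subseteq\Gamma(\mathcal{A})$). For a concrete counterexample take $r_j(\mathcal{A})=0$ with $|a_{ij\cdots j}|>0$; then the right side of the $\mathcal{K}_{i,j}$-inequality is $0$, while any $z$ with $r_i^j(\mathcal{A})<|z-a_{i\cdots i}|\le r_i(\mathcal{A})$ and $z\ne a_{j\cdots j}$ makes the left side strictly positive. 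So in the branch $|z-a_{i\cdots i}|\le r_i(\mathcal{A})$ you cannot dispense with the $\Omega$-hypothesis. What the paper does instead is split on which of the two factors in $\dfrac{|z-a_{q\cdots q}|-r_q^{\overline{\Delta^{S}}}(\mathcal{A})}{r_q^{\Delta^{S}}(\mathcal{A})}\cdot\dfrac{|z-a_{p\cdots p}|}{r_p(\mathcal{A})}\le 1$ is at most $1$. When the $p$-factor is $\le 1$ but the $q$-factor exceeds $1$, one applies Lemma~\ref{nle2.1}(I) on the $p$-side (with $a=|z-a_{p\cdots p}|$, $b+c=r_p^q(\mathcal{A})$, $d=|a_{pq\cdots q}|$) and Lemma~\ref{nle2.1}(II) on the $q$-side, and multiplies to obtain $z\in\mathcal{K}_{p,q}(\mathcal{A})$; the case where the $q$-factor is $\le 1$ yields $z\in\mathcal{K}_{q,p}(\mathcal{A})$ via the substitution you already identified. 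Finally, a minor index slip: from $|z-a_{j\cdots j}|\le r_j^i(\mathcal{A})$ one lands in $\mathcal{K}_{j,i}(\mathcal{A})$, not $\mathcal{K}_{i,j}(\mathcal{A})$; since both lie in $\mathcal{K}^S(\mathcal{A})$ this does not affect the conclusion.
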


\begin{proof} By (\ref{com1}), $\mathcal{K}^S(\mathcal {A})
\subseteq \mathcal{K}(\mathcal {A}) \subseteq \Gamma(\mathcal{A})$
holds.
 We only prove $\Omega^S(\mathcal {A}) \subseteq \mathcal{K}^S(\mathcal {A})$.
Let $z\in \Omega^S(\mathcal {A})$. Then
\[z\in \bigcup\limits_{i\in S,\atop j\in \bar{S}}
\Omega_{i,j}^S(\mathcal {A}) ~or~ z\in \bigcup\limits_{i\in
\bar{S},\atop j\in S} \Omega_{i,j}^{\bar{S}}(\mathcal {A}).\]
Without loss of generality, suppose that  $z\in \bigcup\limits_{i\in
S,\atop j\in \bar{S}} \Omega_{i,j}^S(\mathcal {A})$ (we can prove it
similarly if $z\in \bigcup\limits_{i\in \bar{S},\atop j\in S}
\Omega_{i,j}^{\bar{S}}(\mathcal {A})$ ). Then there are $p\in S$ and
$q\in \bar{S}$ such that $z\in \Omega_{p,q}^S(\mathcal {A})$, i.e.,
 \begin{equation} \label{neq2.3} \left( |z-a_{p\cdots p}|\right)
\left(|z-a_{q\cdots q}|-r_q^{\overline{\Delta^{S}}}(\mathcal
{A})\right)\leq r_p(\mathcal {A}) r_q^{\Delta^{S}}(\mathcal
{A}).\end{equation}

If $r_p(\mathcal {A}) r_q^{\Delta^{S}}(\mathcal {A})=0$, then
$r_p(\mathcal {A})=0,$ or $ r_q^{\Delta^{S}}(\mathcal {A})=0$. When
$r_q^{\Delta^{S}}(\mathcal {A})=0,$ we have $ |a_{qp\cdots p}|=0$,
$r_q^{\Delta^{S}}(\mathcal {A})= r_q^p(\mathcal {A})$ and
\begin{eqnarray*}|z-a_{p\cdots
p}|\left(|z-a_{q\cdots q}|-r_q^p(\mathcal {A})\right)&=&
|z-a_{p\cdots p}|\left(|z-a_{q\cdots
q}|-r_q^{\overline{\Delta^{S}}}(\mathcal {A})\right)\\&\leq&
r_p(\mathcal {A}) r_q^{\Delta^{S}}(\mathcal {A})\\&=& r_p(\mathcal
{A})|a_{qp\cdots p}| \\&=&0,\end{eqnarray*}which implies that $ z\in
\mathcal{K}_{q,p}(\mathcal {A}) \subseteq \bigcup\limits_{i\in
\bar{S},\atop j\in S} \mathcal{K}_{i,j}(\mathcal {A}) \subseteq
\mathcal{K}^S(\mathcal {A}),$ consequently, $\Omega^S(\mathcal {A})
\subseteq \mathcal{K}^S(\mathcal {A})$. When $r_p(\mathcal {A})=0$,
we have
\begin{eqnarray*}
|z-a_{p\cdots p}|\left(|z-a_{q\cdots q}|-r_q^p(\mathcal {A})\right)
&\leq&
|z-a_{p\cdots p}|\left(|z-a_{q\cdots q}|-r_q^{\overline{\Delta^{S}}}(\mathcal {A})\right)\\
&\leq&  r_p(\mathcal {A}) r_q^{\Delta^{S}}(\mathcal
{A})\\
&=&0\\
&=&r_p(\mathcal {A})|a_{qp\cdots p}| .
\end{eqnarray*}
This also leads to $  z\in \bigcup\limits_{i\in \bar{S},\atop j\in
S} \mathcal{K}_{i,j}(\mathcal {A}) \subseteq \mathcal{K}^S(\mathcal
{A}),$ and $\Omega^S(\mathcal {A}) \subseteq \mathcal{K}^S(\mathcal
{A})$.

If $r_p(\mathcal {A}) r_q^{\Delta^{S}}(\mathcal {A})>0$, then we can
equivalently express Inequality (\ref{neq2.3}) as
 \begin{equation} \label{neq2.4} \frac{|z-a_{q\cdots q}|-r_q^{\overline{\Delta^{S}}}(\mathcal {A})}{r_q^{\Delta^{S}}(\mathcal
{A})} \frac{|z-a_{p\cdots p}|}{ r_p(\mathcal {A})}\leq
1,\end{equation} which implies
 \begin{equation} \label{neq2.4.1} \frac{|z-a_{q\cdots q}|-r_q^{\overline{\Delta^{S}}}(\mathcal {A})}{r_q^{\Delta^{S}}(\mathcal
{A})}\leq 1,\end{equation} or
\begin{equation} \label{neq2.4.2} \frac{|z-a_{p\cdots p}|}{ r_p(\mathcal {A})}\leq1.\end{equation}
Let $a=|z-a_{t\cdots t}|$, $ b= r_q^{\overline{\Delta^{S}}}(\mathcal
{A})$, $ c=r_q^{\Delta^{S}}(\mathcal {A})- |a_{qp\cdots p}| $ and
$d=|a_{qp\cdots p}| $. When Inequality (\ref{neq2.4.1}) holds and
$d=|a_{qp\cdots p}| >0$,  from the part (I) in Lemma \ref{nle2.1}
and Inequality (\ref{neq2.4}) we have
\[ \frac{|z-a_{q\cdots q}|-r_q^p(\mathcal {A})}{|a_{qp\cdots p}|} \frac{|z-a_{p\cdots p}|}{ r_p(\mathcal {A})} \leq
 \frac{|z-a_{q\cdots q}|-r_q^{\overline{\Delta^{S}}}(\mathcal {A})}{r_q^{\Delta^{S}}(\mathcal
{A})} \frac{|z-a_{p\cdots p}|}{ r_p(\mathcal {A})}\leq 1\]
equivalently,
\[ |z-a_{p\cdots p}|\left(|z-a_{q\cdots q}|-r_q^p(\mathcal {A})\right) \leq r_p(\mathcal {A})|a_{qp\cdots p}|.\]
 This implies  $\Omega^S(\mathcal {A}) \subseteq
\mathcal{K}^S(\mathcal {A})$. When Inequality (\ref{neq2.4.1}) holds
and $d=|a_{qp\cdots p}| =0$, we easily get
\[ |z-a_{q\cdots q}|-r_q^p(\mathcal {A}) \leq 0=|a_{qp\cdots p}|.\] Hence,
\[ |z-a_{p\cdots p}|\left(|z-a_{q\cdots q}|-r_q^p(\mathcal {A})\right) \leq 0=r_p(\mathcal {A})|a_{qp\cdots p}|.\]
This also implies  $\Omega^S(\mathcal {A}) \subseteq
\mathcal{K}^S(\mathcal {A})$.  On the other hand, when Inequality
(\ref{neq2.4.2}) holds, we only need to prove  $\Omega^S(\mathcal
{A}) \subseteq \mathcal{K}^S(\mathcal {A})$ under the case that
\begin{equation}\label{neq2.9} \frac{|z-a_{q\cdots q}|-r_q^{\overline{\Delta^{S}}}(\mathcal {A})}{r_q^{\Delta^{S}}(\mathcal
{A})}> 1.\end{equation} Note that Inequality (\ref{neq2.9}) yields
\[ \frac{|z-a_{q\cdots q}| } {r_q(\mathcal {A})} >1.\] Hence, when
Inequality (\ref{neq2.4.2}) holds and $|a_{pq\cdots q}|>0$, we have
from Lemma \ref{nle2.1} and Inequality (\ref{neq2.4}) that
\[ \frac{|z-a_{q\cdots q}|}{r_q(\mathcal {A})} \frac{|z-a_{p\cdots p}|-r_p^q(\mathcal {A})}{ |a_{pq\cdots q}|} \leq
\frac{|z-a_{q\cdots q}|-r_q^{\overline{\Delta^{S}}}(\mathcal
{A})}{r_q^{\Delta^{S}}(\mathcal {A})} \frac{|z-a_{p\cdots p}|}{
r_p(\mathcal {A})}\leq 1\] equivalently,
\[ |z-a_{q\cdots q}|\left(|z-a_{p\cdots p}|-r_p^q(\mathcal {A})\right) \leq r_q(\mathcal {A})|a_{pq\cdots q}|.\]
This implies  $z\in \mathcal{K}_{p,q}(\mathcal {A}) \subseteq
\bigcup\limits_{i\in S ,\atop j\in \bar{S}}
\mathcal{K}_{i,j}(\mathcal {A}) \subseteq \mathcal{K}^S(\mathcal
{A})$ and $\Omega^S(\mathcal {A}) \subseteq \mathcal{K}^S(\mathcal
{A})$.
 And when Inequality (\ref{neq2.4.2}) holds and $|a_{pq\cdots q}|=0$, we easily get
\[ |z-a_{p\cdots p}|-r_p^q(\mathcal {A}) \leq 0=|a_{pq\cdots q }|.\] Hence,
\[ |z-a_{q\cdots q}|\left(|z-a_{p\cdots p}|-r_p^q(\mathcal {A})\right) \leq 0=r_q(\mathcal {A})|a_{pq\cdots q}|.\]
This also implies  $\Omega^S(\mathcal {A}) \subseteq
\mathcal{K}^S(\mathcal {A})$. \end{proof}

\begin{rmk}
For a complex tensor $\mathcal {A}\in C^{[m,n]}$ , $n\geq 2$,
 the set $\mathcal{K}^S(\mathcal {A}) $ consists
of $2|S|(n-|S|)$ sets $ \mathcal{K}_{i,j}(\mathcal {A})$, and the
set $\Omega^S(\mathcal {A})$ consists of $|S|(n-|S|)$ sets $
\Omega_{i,j}^S(\mathcal {A}) $ and $|S|(n-|S|)$ sets $
\Omega_{i,j}^{\bar{S}}(\mathcal {A}) $, where $S$ is a nonempty
proper subset of $N$. Hence, under the same computations,
$\Omega^S(\mathcal {A})$ captures all eigenvalues of $\mathcal {A}$
more precisely than $\mathcal{K}^S(\mathcal {A}) $.
\end{rmk}

\section{Sufficient conditions for positive (semi-)definiteness of tensors}
As applications of the results in Sections 2, we in this section
provide some checkable sufficient conditions for the positive
definiteness and positive semi-definiteness of tensors,
respectively. Before that, we give some definitions in
\cite{Di,Li0,Zh}.

\begin{definition}  \cite{Di,Zh} \label{def3.1}
A tensor $\mathcal {A}=(a_{i_1\cdots i_m})\in C^{[m,n]}$ is called a
(strictly) diagonally dominant tensor if for $i\in N$,
\begin{equation}\label{neq2.5} |a_{i\cdots i}|\geq (>) r_i(\mathcal{A}).\end{equation}
\end{definition}

\begin{definition}  \cite{Li0} \label{def3.2}
A tensor $\mathcal {A}=(a_{i_1\cdots i_m})\in C^{[m,n]}$ with $n\geq
2$ is called a quasi-doubly (strictly) diagonally dominant tensor if
for $i, j\in N$, $j\neq i$,
\begin{equation}\label{neq2.5.0} \left(|a_{i\cdots i}|- r_i^j(\mathcal{A}) \right)|a_{j\cdots j}| \geq (>) r_j(\mathcal{A}) |a_{ij\cdots j}|.\end{equation}
\end{definition}

\begin{definition} \label{def3.3}
Let $\mathcal {A}=(a_{i_1\cdots i_m})\in C^{[m,n]}$ with $n\geq 2$
and $S$ be a nonempty proper subset of $N$. $ \mathcal {A}$ is
called an $S$-$QDSDD_0$ ($S$-$QDSDD$) tensor if for each $i\in S$
and each $j\in \bar{S}$, Inequality (\ref{neq2.5.0}) holds and
\begin{equation}\label{neq2.5.1} \left(|a_{j\cdots j}|- r_j^i(\mathcal{A}) \right)|a_{i\cdots i}| \geq (>) r_i(\mathcal{A}) |a_{ji\cdots i}|.\end{equation}
\end{definition}

\begin{definition} \label{def3.4}
Let $\mathcal {A}=(a_{i_1\cdots i_m})\in C^{[m,n]}$ with $n\geq 2$
and $S$ be a nonempty proper subset of $N$. $ \mathcal {A}$ is
called an  $S$-$SDD_0$ ($S$-$SDD$) tensor if for each $i\in S$ and
each $j\in \bar{S}$, \begin{equation}\label{eq2.5.2}  |a_{i\cdots
i}| \left(|a_{j\cdots j}|-r_j^{\overline{\Delta^{S}}}(\mathcal
{A})\right)\geq (>) r_i(\mathcal {A}) r_j^{\Delta^{S}}(\mathcal
{A}),\end{equation} and
\begin{equation}\label{neq2.5.3} |a_{j\cdots j}|
\left(|a_{i\cdots i}|-r_i^{\overline{\Delta^{\bar{S}}}}(\mathcal
{A})\right)\geq (>) r_j(\mathcal {A})
r_i^{\Delta^{\bar{S}}}(\mathcal {A}).\end{equation}
\end{definition}

Next, we give the relationships between (strictly) diagonally
dominant tensors, quasi-doubly (strictly) diagonally dominant
tensors, $S$-$QDSDD_0$ ($S$-$QDSDD$) tensors and $S$-$SDD_0$
($S$-$SDD$) tensors.

\begin{proposition} \label{pro3.1}If  $\mathcal{A}=(a_{i_1\cdots i_m})\in C^{[m,n]}$
is a strictly diagonally dominant tensor, then $\mathcal{A}$ is a
quasi-doubly strictly diagonally dominant tensor. If $ \mathcal{A}$
is a diagonally dominant tensor, then $\mathcal{A}$ is a
quasi-doubly  diagonally dominant tensor.
\end{proposition}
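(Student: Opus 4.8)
The plan is to simply unwind both definitions and reduce each implication to an elementary inequality between nonnegative reals; no appeal to Lemma \ref{nle2.1} or to the localization sets of Section 2 is needed. Fix $i,j\in N$ with $j\neq i$. The one identity I will use repeatedly is the one already recorded before Theorem \ref{nth2.1}, namely $r_i^j(\mathcal{A})=r_i(\mathcal{A})-|a_{ij\cdots j}|$, which rearranges to
\[
|a_{i\cdots i}|-r_i^j(\mathcal{A})=\bigl(|a_{i\cdots i}|-r_i(\mathcal{A})\bigr)+|a_{ij\cdots j}|.
\]

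For the strict statement, assume $\mathcal{A}$ is strictly diagonally dominant. Then Definition \ref{def3.1} gives $|a_{i\cdots i}|-r_i(\mathcal{A})>0$ for every index, so the displayed identity yields $|a_{i\cdots i}|-r_i^j(\mathcal{A})>|a_{ij\cdots j}|\ge 0$; in particular this quantity is positive. Similarly $|a_{j\cdots j}|>r_j(\mathcal{A})\ge 0$, so $|a_{j\cdots j}|>0$. Multiplying $|a_{i\cdots i}|-r_i^j(\mathcal{A})>|a_{ij\cdots j}|$ by $|a_{j\cdots j}|>0$ and then using $|a_{j\cdots j}|\ge r_j(\mathcal{A})$ together with $|a_{ij\cdots j}|\ge 0$ gives
\[
\bigl(|a_{i\cdots i}|-r_i^j(\mathcal{A})\bigr)|a_{j\cdots j}|>|a_{ij\cdots j}|\,|a_{j\cdots j}|\ge r_j(\mathcal{A})\,|a_{ij\cdots j}|,
\]
which is exactly Inequality (\ref{neq2.5.0}) in its strict form. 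Since $i,j$ were arbitrary, $\mathcal{A}$ is quasi-doubly strictly diagonally dominant. For the non-strict statement the same chain of estimates works verbatim with every $>$ weakened to $\ge$: from $|a_{i\cdots i}|\ge r_i(\mathcal{A})$ we get $|a_{i\cdots i}|-r_i^j(\mathcal{A})\ge|a_{ij\cdots j}|\ge 0$, and from $|a_{j\cdots j}|\ge r_j(\mathcal{A})\ge 0$ we then obtain $(|a_{i\cdots i}|-r_i^j(\mathcal{A}))|a_{j\cdots j}|\ge|a_{ij\cdots j}|\,|a_{j\cdots j}|\ge r_j(\mathcal{A})\,|a_{ij\cdots j}|$, i.e.\ Inequality (\ref{neq2.5.0}) without the strictness.

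There is no real obstacle in this argument; the only point requiring a moment of care is the monotonicity of multiplication, i.e.\ making sure that whenever two inequalities are multiplied together the relevant factors are nonnegative. This holds because $r_j(\mathcal{A})$, $|a_{ij\cdots j}|$ and $|a_{j\cdots j}|$ are all $\ge 0$ by definition, and in the strict case the diagonal-dominance hypothesis additionally forces $|a_{j\cdots j}|>0$, so no degenerate multiplication occurs.
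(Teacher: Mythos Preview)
Your proof is correct and follows essentially the same approach as the paper: both derive $|a_{i\cdots i}|-r_i^j(\mathcal{A})>|a_{ij\cdots j}|$ from strict diagonal dominance at $i$, combine it with $|a_{j\cdots j}|>r_j(\mathcal{A})$ from dominance at $j$, and multiply to obtain (\ref{neq2.5.0}). Your treatment is in fact more careful about the nonnegativity needed to multiply the inequalities, and the non-strict case is handled correctly by the same chain with weak inequalities.
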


\begin{proof}  If $\mathcal{A}$ is a strictly diagonally dominant tensor, then for any $i\in N$,
\[ |a_{i\cdots i}|> r_i(\mathcal{A}) ,\] equivalently,
\[|a_{i\cdots i}|-r_i^j(\mathcal{A}) > |a_{ij\cdots j}|.\]
Hence, for $i,j\in N$, $j\neq i$,
\[ |a_{i\cdots i}| > r_i(\mathcal{A}),\]
and
\[|a_{j\cdots j}|-r_j^i(\mathcal{A}) > |a_{ji\cdots i}|,\]
which implies that the strict inequality (\ref{neq2.5.0}) holds,
i.e., $\mathcal{A}$ is a quasi-doubly strictly diagonally dominant
tensor by  Definition \ref{def3.2}. Similarly, we can prove that if
$\mathcal{A}$ is a diagonally dominant tensor, then $\mathcal{A}$ is
a quasi-doubly diagonally dominant tensor. \end{proof}

\begin{proposition} \label{pro3.2}If  $\mathcal{A}=(a_{i_1\cdots i_m})\in C^{[m,n]}$
is a quasi-doubly strictly diagonally dominant tensor, then
$\mathcal{A}$ is an $S$-$QDSDD$ tensor. If $ \mathcal{A}$ is a
quasi-doubly diagonally dominant tensor, then $\mathcal{A}$ is an
$S$-$QDSDD_0$  tensor.
\end{proposition}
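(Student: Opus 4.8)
The plan is to notice that quasi-double (strict) diagonal dominance is a hypothesis quantified over \emph{every} ordered pair of distinct indices, while being an $S$-$QDSDD$ ($S$-$QDSDD_0$) tensor asks only for a pair of inequalities attached to each $i\in S$ and each $j\in\bar S$; so the implication should reduce to restricting the range of indices and relabeling a pair.

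First I would assume $\mathcal{A}$ is a quasi-doubly strictly diagonally dominant tensor. By Definition \ref{def3.2}, the strict form of Inequality (\ref{neq2.5.0}) holds for all $i,j\in N$ with $j\neq i$. Fix $i\in S$ and $j\in\bar S$; since $S$ and $\bar S$ are disjoint, $i\neq j$, so (\ref{neq2.5.0}) for this pair is exactly the first requirement in Definition \ref{def3.3}. For the second requirement, I would apply (\ref{neq2.5.0}) to the ordered pair $(j,i)$ — legitimate because $j\neq i$ — obtaining
\[\left(|a_{j\cdots j}|-r_j^i(\mathcal{A})\right)|a_{i\cdots i}|>r_i(\mathcal{A})\,|a_{ji\cdots i}|,\]
which is precisely the strict form of Inequality (\ref{neq2.5.1}). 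As $i\in S$ and $j\in\bar S$ were arbitrary, Definition \ref{def3.3} gives that $\mathcal{A}$ is an $S$-$QDSDD$ tensor.

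For the second assertion I would run the identical argument with every strict $>$ replaced by $\geq$: a quasi-doubly diagonally dominant tensor satisfies (\ref{neq2.5.0}) nonstrictly for all distinct $i,j$, and restricting to $i\in S$, $j\in\bar S$ together with the relabeling $(i,j)\mapsto(j,i)$ yields the nonstrict forms of (\ref{neq2.5.0}) and (\ref{neq2.5.1}), i.e.\ $\mathcal{A}$ is an $S$-$QDSDD_0$ tensor.

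There is no genuine obstacle here; the only point to be careful about is that Definition \ref{def3.3} bundles two inequalities per pair, and that the second, (\ref{neq2.5.1}), is nothing but (\ref{neq2.5.0}) after swapping the two indices — so both follow from the single universally quantified hypothesis. One may add the trivial remark that, $S$ being a nonempty proper subset of $N$, there is at least one admissible pair $(i,j)$ with $i\in S$, $j\in\bar S$, so the conclusion is not vacuous.
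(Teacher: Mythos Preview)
Your proof is correct and follows essentially the same route as the paper: both observe that quasi-double (strict) diagonal dominance is universally quantified over distinct ordered pairs $(i,j)\in N\times N$, so restricting to $i\in S$, $j\in\bar S$ gives Inequality~(\ref{neq2.5.0}) directly and swapping the roles of $i$ and $j$ gives Inequality~(\ref{neq2.5.1}). The nonstrict case is handled identically in both.
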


\begin{proof} If $\mathcal{A}$ is a quasi-doubly strictly diagonally dominant tensor, then for $i, j\in N$, $j\neq i$,
the strict inequality (\ref{neq2.5.0}) holds, i.e.,
\[\left(|a_{i\cdots i}|- r_i^j(\mathcal{A}) \right)|a_{j\cdots j}|
> r_j(\mathcal{A}) |a_{ij\cdots j}|.\]
For a given nonempty proper subset $S$ of $N$, we easily get that
for each $i\in S$ and each $j\in \bar{S}$,
\[\left(|a_{i\cdots i}|- r_i^j(\mathcal{A}) \right)|a_{j\cdots j}|
> r_j(\mathcal{A}) |a_{ij\cdots j}|,\] and
\[ \left(|a_{j\cdots j}|- r_j^i(\mathcal{A}) \right)|a_{i\cdots i}|
> r_i(\mathcal{A}) |a_{ji\cdots i}|.\]
Hence, $\mathcal{A}$ is an $S$-$QDSDD$ tensor. Similarly, we can
prove that  a quasi-doubly diagonally dominant tensor is an
$S$-$QDSDD_0$  tensor. \end{proof}

\begin{proposition} \label{pro3.3} Let $\mathcal{A}=(a_{i_1\cdots i_m})\in C^{[m,n]}$ and $S$ be a nonempty proper subset of $N$.
If  $\mathcal{A}$ is an $S$-$QDSDD$ tensor, then $\mathcal{A}$ is an
$S$-$SDD$  tensor. If $\mathcal{A}$ is an $S$-$QDSDD_0$  tensor,
then $\mathcal{A}$ is an $S$-$SDD_0$  tensor.
\end{proposition}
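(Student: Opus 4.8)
The plan is to run the argument of Theorem \ref{com} again, specialised to the single point $z=0$. The point is that substituting $z=0$ into the defining inequalities of $\Omega_{i,j}^{S}(\mathcal{A})$, $\Omega_{i,j}^{\bar S}(\mathcal{A})$, $\mathcal{K}_{i,j}(\mathcal{A})$ and using $|0-a_{k\cdots k}|=|a_{k\cdots k}|$ produces exactly the negations of the inequalities in Definitions \ref{def3.3} and \ref{def3.4}: after the relabelling $i\leftrightarrow j$ in the union over $i\in\bar S,\ j\in S$, the condition $0\notin\Omega^{S}(\mathcal{A})$ is precisely the statement that the strict forms of (\ref{eq2.5.2}) and (\ref{neq2.5.3}) hold for every $i\in S,\ j\in\bar S$, i.e. that $\mathcal{A}$ is an $S$-$SDD$ tensor, and likewise $0\notin\mathcal{K}^{S}(\mathcal{A})$ is the same as $\mathcal{A}$ being an $S$-$QDSDD$ tensor. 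Hence the first assertion is immediate: by Theorem \ref{com} we have $\Omega^{S}(\mathcal{A})\subseteq\mathcal{K}^{S}(\mathcal{A})$, so $0\notin\mathcal{K}^{S}(\mathcal{A})$ forces $0\notin\Omega^{S}(\mathcal{A})$.

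For the implication ``$S$-$QDSDD_0$ tensor $\Rightarrow$ $S$-$SDD_0$ tensor'' the ``$0\notin(\cdot)$'' translation is no longer available because equality is now allowed, so I would prove it directly and pointwise, copying the computation inside the proof of Theorem \ref{com} with $z$ replaced by $0$. Fix $p\in S$ and $q\in\bar S$ and assume the non-strict forms of (\ref{neq2.5.0}) and (\ref{neq2.5.1}) for this pair. If $r_{p}(\mathcal{A})r_{q}^{\Delta^{S}}(\mathcal{A})=0$, the inequality (\ref{eq2.5.2}) (with $\geq$) is obtained exactly as in the corresponding case there. Otherwise I would put $a=|a_{q\cdots q}|$, $b=r_{q}^{\overline{\Delta^{S}}}(\mathcal{A})$, $c=r_{q}^{\Delta^{S}}(\mathcal{A})-|a_{qp\cdots p}|$, $d=|a_{qp\cdots p}|$, note that $b+c+d=r_{q}(\mathcal{A})$, and use Lemma \ref{nle2.1} to relate the quotient $\frac{|a_{q\cdots q}|-r_{q}^{\overline{\Delta^{S}}}(\mathcal{A})}{r_{q}^{\Delta^{S}}(\mathcal{A})}$ occurring in (\ref{eq2.5.2}) to $\frac{|a_{q\cdots q}|-r_{q}^{p}(\mathcal{A})}{|a_{qp\cdots p}|}$ and $\frac{|a_{q\cdots q}|}{r_{q}(\mathcal{A})}$, which occur in (\ref{neq2.5.1}) and (\ref{neq2.5.0}); since Lemma \ref{nle2.1} and the subsequent multiplications by the nonnegative factors $\frac{|a_{p\cdots p}|}{r_{p}(\mathcal{A})}$, $\frac{|a_{p\cdots p}|-r_{p}^{q}(\mathcal{A})}{|a_{pq\cdots q}|}$ only use $\leq$, the hypotheses yield (\ref{eq2.5.2}) with $\geq$. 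Inequality (\ref{neq2.5.3}) is obtained in the same way with $S$ and $\bar S$ (equivalently $p$ and $q$) interchanged, which corresponds to Case II in the proof of Theorem \ref{nth2.1} and to the ``similarly'' branch in the proof of Theorem \ref{com}.

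The only real work is bookkeeping, not new ideas: keeping track of which of (\ref{eq2.5.2}), (\ref{neq2.5.3}) comes out of which branch, and running through the degenerate subcases $r_{p}(\mathcal{A})=0$, $r_{q}^{\Delta^{S}}(\mathcal{A})=0$, $|a_{qp\cdots p}|=0$, $|a_{pq\cdots q}|=0$ — precisely the case splits already performed in the proof of Theorem \ref{com}, now read with $z=0$. A tidier way to package everything, which I would also mention, is to observe that every inequality in the proof of Theorem \ref{com} remains valid when $\leq$ is replaced throughout by $<$; therefore that proof, evaluated at $z=0$, delivers both ``$S$-$QDSDD\Rightarrow S$-$SDD$'' and ``$S$-$QDSDD_0\Rightarrow S$-$SDD_0$'' at once, so that no separate argument for the subscript-$0$ case is strictly necessary.
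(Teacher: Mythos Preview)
Your proposal is correct and takes a genuinely different route from the paper. Your key observation --- that, after the obvious relabelling, ``$\mathcal{A}$ is $S$-$QDSDD$'' is literally the statement $0\notin\mathcal{K}^{S}(\mathcal{A})$ and ``$\mathcal{A}$ is $S$-$SDD$'' is $0\notin\Omega^{S}(\mathcal{A})$ --- lets you read the strict implication straight off Theorem~\ref{com}, and your handling of the $_0$ variant (re-running the same case analysis with non-strict inequalities, or equivalently noting that the proof of Theorem~\ref{com} survives with $<$ throughout so that its contrapositive gives $S$-$QDSDD_0\Rightarrow S$-$SDD_0$) is also sound; the degenerate subcases you flag are exactly the ones that need checking, and they do go through.

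The paper instead argues directly, without invoking Theorem~\ref{com}. Its organizing step is a small structural lemma you do not use: from the $S$-$QDSDD$ inequalities one sees that \emph{either} every $i\in S$ satisfies $|a_{i\cdots i}|>r_i(\mathcal{A})$ \emph{or} every $j\in\bar S$ does. Assuming the latter (say), the proof then splits each $i\in S$ into the diagonally dominant case (where (\ref{eq2.5.2}) and (\ref{neq2.5.3}) are immediate) and the non-dominant case (where Lemma~\ref{nle2.1} is applied, just as in your and the paper's proof of Theorem~\ref{com}). Your approach is cleaner conceptually and avoids repeating work already done in Theorem~\ref{com}; the paper's approach is self-contained and surfaces the pleasant fact that $S$-$QDSDD$ forces at least one of the two blocks to be strictly diagonally dominant.
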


\begin{proof}  We only prove that an  $S$-$QDSDD$ tensor is an $S$-$SDD$ tensor,
and by a similar way, we can prove that an $S$-$QDSDD_0$ tensor is
an $S$-$SDD_0$ tensor.

 Let $\mathcal{A}$ be an  $S$-$QDSDD$ tensor. It is easy to see
from Definition  $\ref{def3.3}$ that either for any $i\in S$,
 \begin{equation} \label{eq3.4} |a_{i\cdots i}| >  r_i(\mathcal{A}),\end{equation}
or for any $j\in \bar{S}$, \begin{equation} \label{eq3.5}
|a_{j\cdots j}|
> r_j(\mathcal{A}). \end{equation} Without loss of generality, we
next suppose that for any $j\in \bar{S}$, Inequality (\ref{eq3.5})
holds. Hence, for any $j\in \bar{S}$,
\begin{equation} \label{eq3.6}
|a_{j\cdots j}|-r_j^i(\mathcal{A})
> |a_{ji\cdots i}| \end{equation}
and
\begin{equation} \label{eq3.7}
|a_{j\cdots j}|-r_j^{\overline{\Delta^{S}}}(\mathcal {A})
> r_j^{\Delta^{S}}(\mathcal
{A}). \end{equation}

Case I: for $i\in S$ such that Inequality (\ref{eq3.4} ) holds,
i.e.,
\begin{equation} \label{eq3.8}
|a_{i\cdots i}|-r_i^{\overline{\Delta^{\bar{S}}}}(\mathcal {A})
> r_i^{\Delta^{\bar{S}}}(\mathcal
{A}), \end{equation} by combining with Inequalities (\ref{eq3.5})
and (\ref{eq3.7}) we easily get that for this $i\in S$ and each
$j\in \bar{S}$, Inequalities (\ref{eq2.5.2}) and  (\ref{neq2.5.3})
hold.

Case II: for $i\in S$ such that
\[|a_{i\cdots i}| \leq  r_i(\mathcal{A}),\]
by Definition \ref{def3.3} we can get that $0<|a_{i\cdots i}| \leq
r_i(\mathcal{A})$,
\begin{equation} \label{eq3.9}
0< |a_{i\cdots i}|-r_i^j(\mathcal {A}) \leq |a_{ij\cdots j}|,
\end{equation}
and
\begin{equation} \label{eq3.10}
0<|a_{i\cdots i}|-r_i^{\overline{\Delta^{\bar{S}}}}(\mathcal {A})
\leq r_i^{\Delta^{\bar{S}}}(\mathcal {A}). \end{equation} Let
$a=|a_{i\cdots i}|$, $b=r_i^{\overline{\Delta^{\bar{S}}}}(\mathcal
{A})$, $c= r_i^{\Delta^{\bar{S}}}(\mathcal {A})-|a_{ij\cdots j}|$,
$d=|a_{ij\cdots j}|$, $e=|a_{j\cdots j}|$,
$f=r_j^{\overline{\Delta^{S}}}(\mathcal {A})$,
$g=r_j^{\Delta^{S}}(\mathcal {A})-|a_{ji\cdots i}|$ and
$h=|a_{ji\cdots i}|$. If $ r_j(\mathcal {A})\neq 0$ for some $j\in
\bar{S}$, then by Inequality (\ref{neq2.5.0}), Inequality
(\ref{eq3.9}) and by Lemma \ref{nle2.1}, we have
\[ \frac{|a_{i\cdots i}|- r_i^{\overline{\Delta^{\bar{S}}}}(\mathcal
{A})}{ r_i^{\Delta^{\bar{S}}}(\mathcal {A})} \frac{|a_{j\cdots
j}|}{r_j(\mathcal{A})} \geq \frac{|a_{i\cdots i}|-
r_i^j(\mathcal{A})}{ |a_{ij\cdots j}|} \frac{|a_{j\cdots
j}|}{r_j(\mathcal{A})}>1,\] and $ |a_{j\cdots j}| \left(|a_{i\cdots
i}|-r_i^{\overline{\Delta^{\bar{S}}}}(\mathcal {A})\right)>
r_j(\mathcal {A}) r_i^{\Delta^{\bar{S}}}(\mathcal {A}) $, i.e.,
Inequality (\ref{neq2.5.3}) holds. Similarly, by Inequality
(\ref{neq2.5.0}) and by Lemma \ref{nle2.1}, we can also get
\[ \frac{|a_{i\cdots i}|}{ r_i(\mathcal {A})}
\frac{|a_{j\cdots j}|-r_j^{\overline{\Delta^{S}}}(\mathcal
{A})}{r_j^{\Delta^{S}}(\mathcal {A})} \geq \frac{|a_{i\cdots i}|-
r_i^j(\mathcal{A})}{ |a_{ij\cdots j}|} \frac{|a_{j\cdots
j}|}{r_j(\mathcal{A})}>1,\] where $ \frac{|a_{j\cdots
j}|-r_j^{\overline{\Delta^{S}}}(\mathcal
{A})}{r_j^{\Delta^{S}}(\mathcal {A})}=+ \infty$ if
$r_j^{\Delta^{S}}(\mathcal {A})=0 $, and $ |a_{i\cdots i}|
\left(|a_{j\cdots j}|-r_j^{\overline{\Delta^{S}}}(\mathcal
{A})\right)> r_i(\mathcal {A}) r_j^{\Delta^{S}}(\mathcal {A}),$
i.e., Inequality  (\ref{eq2.5.2}) holds. On the other hand, if $
r_j(\mathcal {A})=0$ for some $j\in \bar{S}$, then $
r_j^{\overline{\Delta^{S}}}(\mathcal {A})=r_j^{\Delta^{S}}(\mathcal
{A})=0$. Obviously, Inequalities (\ref{eq2.5.2}) and
(\ref{neq2.5.3}) also hold. The conclusion follows from Cases I and
II.
\end{proof}

As shown in \cite{Li1,Li2}, by using eigenvalue localization sets
for tensors, one can give some corresponding checkable sufficient
conditions of the positive (semi-)definiteness of tensors. Here we
call a tensor $\mathcal {A}=(a_{i_1\cdots i_m})\in R^{[m,n]}$
symmetric \cite{Qi,Ya} if
\[a_{i_1\cdots i_m }= a_{\pi(i_1\cdots i_m )},\forall \pi\in
\Pi_m,\]where $\Pi_m$ is the permutation group of $m$ indices. And
an even-order real symmetric tensor is called positive
(semi-)definite, if its smallest H-eigenvalue is positive
(nonnegative). Next, a new checkable sufficient condition of the
positive (semi-)definiteness of tensors is obtained by using Theorem
\ref{nth2.1}.

\begin{thm}
\label{th3.1} Let $\mathcal {A}=(a_{i_1\cdots i_m})\in R^{[m,n]}$
with $n\geq 2$ and $S$ be a nonempty proper subset of $N$. If
$\mathcal {A}$ is an even-order symmetric $S$-$SDD$ ($S$-$SDD_0$)
tensor with $a_{k\cdots k}
> (\geq)  ~0$ for all $k\in N$,  then $\mathcal {A}$ is positive
(semi-)definite.
\end{thm}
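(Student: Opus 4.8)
The plan is to reduce the claim to the eigenvalue localization set $\Omega^S(\mathcal{A})$ of Theorem \ref{nth2.1}, exactly as analogous positive-definiteness results are deduced from their localization sets in \cite{Li1,Li2}. Since $\mathcal{A}$ is an even-order real symmetric tensor, it has at least one H-eigenvalue, and it is positive (semi-)definite precisely when its smallest H-eigenvalue is positive (nonnegative); moreover every eigenvalue of $\mathcal{A}$ lies in $\Omega^S(\mathcal{A})$. Thus it suffices to show that the hypothesis ``$\mathcal{A}$ is an $S$-$SDD$ ($S$-$SDD_0$) tensor with $a_{k\cdots k}>(\geq)\,0$ for all $k\in N$'' forces every point $z\in\Omega^S(\mathcal{A})$ to satisfy $\mathrm{Re}(z)>0$ (respectively $\mathrm{Re}(z)\geq 0$); in particular the smallest H-eigenvalue, being real and lying in $\Omega^S(\mathcal{A})$, is positive (nonnegative).

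First I would fix $z\in\Omega^S(\mathcal{A})$. By definition of $\Omega^S(\mathcal{A})$ there are indices $p$ and $q$ with (say) $p\in S$, $q\in\bar S$ and $z\in\Omega^S_{p,q}(\mathcal{A})$, the other case $z\in\Omega^{\bar S}_{q,p}(\mathcal{A})$ being symmetric. So
\[
|z-a_{p\cdots p}|\bigl(|z-a_{q\cdots q}|-r_q^{\overline{\Delta^{S}}}(\mathcal{A})\bigr)\leq r_p(\mathcal{A})\,r_q^{\Delta^{S}}(\mathcal{A}).
\]
Now I argue by contradiction: suppose $\mathrm{Re}(z)\leq 0$ in the $S$-$SDD$ case (or $\mathrm{Re}(z)<0$ in the $S$-$SDD_0$ case). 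Because $a_{p\cdots p}>0$ and $a_{q\cdots q}>0$ are real and positive, $\mathrm{Re}(z)\leq 0$ gives $|z-a_{p\cdots p}|\geq a_{p\cdots p}-\mathrm{Re}(z)\geq a_{p\cdots p}>0$ and likewise $|z-a_{q\cdots q}|\geq a_{q\cdots q}>0$. Substituting these lower bounds into the displayed inequality (and noting the left factor in parentheses is then $\geq a_{q\cdots q}-r_q^{\overline{\Delta^{S}}}(\mathcal{A})$, which by the $S$-$SDD$ inequality \eqref{eq2.5.2} combined with $a_{p\cdots p}>0$ is strictly positive, so no sign reversal occurs) yields
\[
a_{p\cdots p}\bigl(a_{q\cdots q}-r_q^{\overline{\Delta^{S}}}(\mathcal{A})\bigr)\leq |z-a_{p\cdots p}|\bigl(|z-a_{q\cdots q}|-r_q^{\overline{\Delta^{S}}}(\mathcal{A})\bigr)\leq r_p(\mathcal{A})\,r_q^{\Delta^{S}}(\mathcal{A}),
\]
which directly contradicts \eqref{eq2.5.2}. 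Hence $\mathrm{Re}(z)>0$, and in the $S$-$SDD_0$ case the same computation with the non-strict versions of the inequalities gives $\mathrm{Re}(z)\geq 0$. The symmetric case $z\in\Omega^{\bar S}_{q,p}(\mathcal{A})$ uses \eqref{neq2.5.3} in place of \eqref{eq2.5.2} in exactly the same way.

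The one point requiring care — and the place where I expect the only real subtlety to lie — is the monotonicity step: when I replace $|z-a_{q\cdots q}|$ by its lower bound $a_{q\cdots q}$ inside the factor $\bigl(|z-a_{q\cdots q}|-r_q^{\overline{\Delta^{S}}}(\mathcal{A})\bigr)$, I must be sure this factor is nonnegative so that multiplying by the other (nonnegative) factor preserves the inequality direction. This is guaranteed by \eqref{eq2.5.2} together with $a_{p\cdots p}>0$: if $a_{q\cdots q}-r_q^{\overline{\Delta^{S}}}(\mathcal{A})$ were $\leq 0$ then the left side of \eqref{eq2.5.2} would be $\leq 0 < $ its right side unless $r_p(\mathcal{A})r_q^{\Delta^{S}}(\mathcal{A})=0$, and a short separate check handles that degenerate subcase. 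Once this is in place the rest is routine, and the conclusion about positive (semi-)definiteness follows since the smallest H-eigenvalue is a real number in $\Omega^S(\mathcal{A})$.
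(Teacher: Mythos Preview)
Your proposal is correct and follows essentially the same route as the paper: both argue by contradiction, placing a putative nonpositive (respectively negative) H-eigenvalue in $\Omega^S(\mathcal{A})$ via Theorem~\ref{nth2.1}, and then using $|\lambda-a_{k\cdots k}|>|a_{k\cdots k}|$ together with the $S$-$SDD$ (respectively $S$-$SDD_0$) inequalities (\ref{eq2.5.2}) and (\ref{neq2.5.3}) to obtain a contradiction. Your version is slightly more general in that you show the whole set $\Omega^S(\mathcal{A})$ lies in the open (closed) right half-plane rather than arguing only about the real H-eigenvalue, and you are more explicit than the paper about the sign check on the factor $|a_{q\cdots q}|-r_q^{\overline{\Delta^S}}(\mathcal{A})$, but the underlying argument is the same.
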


\begin{proof} We need only prove that $\mathcal {A}$ is positive semi-definite,
 and by a similar way, we can prove that $\mathcal {A}$ is positive definite. Let $\lambda$ be an H-eigenvalue of $\mathcal {A}$.
Suppose on the contrary that $\lambda < 0$. From Theorem
\ref{nth2.1}, we have $\lambda\in \Omega^S (\mathcal {A})$  which
implies that there are $i_0,i_1\in S,j_0,j_1 \in \bar{S} $ such that
$\lambda\in \Omega_{i_0,j_0}^S(\mathcal {A})$ or $\lambda\in
\Omega_{j_1,i_1}^{\bar{S}}(\mathcal {A}) $, that is,
\[|\lambda-a_{i_0\cdots i_0}|
\left(|\lambda-a_{j_0\cdots
j_0}|-r_{j_0}^{\overline{\Delta^{S}}}(\mathcal {A})\right)\leq
r_{i_0}(\mathcal {A}) r_{j_0}^{\Delta^{S}}(\mathcal {A})\] or
\[|\lambda-a_{j_1\cdots j_1}|
\left(|\lambda-a_{i_1\cdots
i_1}|-r_{i_1}^{\overline{\Delta^{\bar{S}}}}(\mathcal {A})\right)\leq
r_{j_1}(\mathcal {A}) r_{i_1}^{\Delta^{\bar{S}}}(\mathcal {A}) .\]
On the other hand, since $\mathcal {A}$ is an $S$-$SDD_0$ tensor
with $a_{k\cdots k} \geq 0$ for all $k\in N$, we have that for
$i_0,i_1\in S,j_0,j_1 \in \bar{S} $, \[|\lambda-a_{i_0\cdots i_0}|
\left(|\lambda-a_{j_0\cdots
j_0}|-r_{j_0}^{\overline{\Delta^{S}}}(\mathcal {A})\right)>
|a_{i_0\cdots i_0}| \left(|a_{j_0\cdots
j_0}|-r_{j_0}^{\overline{\Delta^{S}}}(\mathcal {A})\right)  \geq
r_{i_0}(\mathcal {A}) r_{j_0}^{\Delta^{S}}(\mathcal {A})\] and
\[|\lambda-a_{j_1\cdots j_1}|
\left(|\lambda-a_{i_1\cdots
i_1}|-r_{i_1}^{\overline{\Delta^{\bar{S}}}}(\mathcal
{A})\right)>|a_{j_1\cdots j_1}| \left(|a_{i_1\cdots
i_1}|-r_{i_1}^{\overline{\Delta^{\bar{S}}}}(\mathcal {A})\right)
\geq r_{j_1}(\mathcal {A}) r_{i_1}^{\Delta^{\bar{S}}}(\mathcal {A})
.\] These lead to a contradiction. Hence, $\lambda\geq 0$, and
$\mathcal{A}$ is positive semi-definite. The conclusion follows.
\end{proof}

According to Theorem \ref{th3.1}, Proposition \ref{pro3.1},
Proposition \ref{pro3.2} and Proposition \ref{pro3.3}, we easily
obtain the following results which were also obtained in
\cite{Di,Li,Li1,Zh}.

\begin{corollary} \label{npro2.1} An even-order strictly diagonally dominant symmetric
tensor with all positive diagonal entries is positive definite. And
an even-order diagonally dominant symmetric tensor with all
nonnegative diagonal entries is positive semi-definite.
\end{corollary}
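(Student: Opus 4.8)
The plan is to derive Corollary~\ref{npro2.1} purely as a combination of the implications established just above, without revisiting any eigenvalue estimate directly. First I would observe that the statement is almost a tautology once the chain of propositions is in place: Proposition~\ref{pro3.1} says that a (strictly) diagonally dominant tensor is a quasi-doubly (strictly) diagonally dominant tensor; Proposition~\ref{pro3.2} says the latter is an $S$-$QDSDD$ ($S$-$QDSDD_0$) tensor for any nonempty proper $S\subseteq N$; and Proposition~\ref{pro3.3} says an $S$-$QDSDD$ ($S$-$QDSDD_0$) tensor is an $S$-$SDD$ ($S$-$SDD_0$) tensor. Composing these three gives that an (even-order symmetric) strictly diagonally dominant tensor is an $S$-$SDD$ tensor, and a diagonally dominant one is an $S$-$SDD_0$ tensor.

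Next I would invoke Theorem~\ref{th3.1}. To apply it I must check the diagonal sign hypothesis: in the strictly diagonally dominant case, $|a_{k\cdots k}|>r_k(\mathcal{A})\ge 0$ forces $a_{k\cdots k}\ne 0$, and since we are told all diagonal entries are positive, indeed $a_{k\cdots k}>0$ for all $k\in N$; in the diagonally dominant case the hypothesis is merely $a_{k\cdots k}\ge 0$, which is exactly the "all nonnegative diagonal entries" assumption. Fixing any nonempty proper subset $S$ of $N$ (such an $S$ exists because $n\ge 2$), Theorem~\ref{th3.1} then yields that the even-order symmetric strictly diagonally dominant tensor with positive diagonal is positive definite, and the even-order symmetric diagonally dominant tensor with nonnegative diagonal is positive semi-definite, which is the claim.

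I would write this out in two short paragraphs, one for each half of the corollary, explicitly naming the three propositions and Theorem~\ref{th3.1} at each step, and remarking that the conclusion is independent of the choice of $S$. There is essentially no obstacle here: the only thing to be careful about is the logical bookkeeping of which variant ("strict" vs.\ non-strict, "$S$-$SDD$" vs.\ "$S$-$SDD_0$") propagates through each implication, and the trivial verification that strict diagonal dominance together with positive diagonal entries is consistent (so that Theorem~\ref{th3.1}'s strict hypotheses are genuinely met). If I wanted to be maximally self-contained I could even bypass the $QDSDD$ layer and note that a strictly diagonally dominant tensor satisfies~\eqref{eq2.5.2} and~\eqref{neq2.5.3} directly, since $|a_{i\cdots i}|>r_i(\mathcal{A})\ge r_i^{\overline{\Delta^{\bar S}}}(\mathcal{A})+r_i^{\Delta^{\bar S}}(\mathcal{A})$ already gives $|a_{i\cdots i}|-r_i^{\overline{\Delta^{\bar S}}}(\mathcal{A})>r_i^{\Delta^{\bar S}}(\mathcal{A})$, and then multiply by the (positive) quantity $|a_{j\cdots j}|\ge r_j(\mathcal{A})\ge r_j^{\Delta^S}(\mathcal{A})$; but routing through Propositions~\ref{pro3.1}--\ref{pro3.3} is cleaner and matches the paper's narrative, so that is the route I would take.
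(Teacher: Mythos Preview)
Your proposal is correct and matches the paper's own approach exactly: the paper derives Corollary~\ref{npro2.1} (together with Corollaries~\ref{npro2.2} and~\ref{npro2.3}) simply by citing Theorem~\ref{th3.1} and Propositions~\ref{pro3.1}--\ref{pro3.3}, precisely the chain of implications you describe.
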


\begin{corollary} \label{npro2.2} An even-order
quasi-doubly strictly diagonally dominant symmetric tensor with all
positive diagonal entries is positive definite. And an even-order
quasi-doubly diagonally dominant tensor symmetric tensor with all
nonnegative diagonal entries is positive semi-definite.
\end{corollary}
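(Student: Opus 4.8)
The plan is to derive both assertions by simply chaining the implications already in hand. Since Theorem \ref{th3.1} and Propositions \ref{pro3.2} and \ref{pro3.3} are all phrased relative to an arbitrary nonempty proper subset $S$ of $N$ (and $n\geq 2$ is built into the very notion of a quasi-doubly diagonally dominant tensor, cf.\ Definition \ref{def3.2}), the first step is to fix one such $S$ once and for all, say $S=\{1\}$; after that everything is bookkeeping.

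For the first assertion, let $\mathcal{A}\in R^{[m,n]}$ be an even-order symmetric quasi-doubly strictly diagonally dominant tensor with $a_{k\cdots k}>0$ for every $k\in N$. By Proposition \ref{pro3.2}, $\mathcal{A}$ is an $S$-$QDSDD$ tensor for the chosen $S$; by Proposition \ref{pro3.3}, it is then an $S$-$SDD$ tensor. Hence $\mathcal{A}$ is an even-order symmetric $S$-$SDD$ tensor all of whose diagonal entries are positive, so Theorem \ref{th3.1} applies and yields that $\mathcal{A}$ is positive definite.

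For the second assertion the argument is word for word the same with the strict notions replaced by their non-strict versions: if $\mathcal{A}$ is an even-order symmetric quasi-doubly diagonally dominant tensor with $a_{k\cdots k}\geq 0$ for all $k\in N$, then Proposition \ref{pro3.2} makes it an $S$-$QDSDD_0$ tensor, Proposition \ref{pro3.3} upgrades this to an $S$-$SDD_0$ tensor, and the semi-definite half of Theorem \ref{th3.1} concludes that $\mathcal{A}$ is positive semi-definite.

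There is no real obstacle here; the corollary is a pure assembly of Propositions \ref{pro3.2} and \ref{pro3.3} with Theorem \ref{th3.1}. The only point deserving a sentence of care is that the sign conditions on the diagonal must survive the chain: neither Proposition \ref{pro3.2} nor Proposition \ref{pro3.3} touches the entries $a_{k\cdots k}$, so the hypothesis $a_{k\cdots k}>0$ (resp.\ $a_{k\cdots k}\geq 0$) for all $k$ is still available at the step where Theorem \ref{th3.1} is invoked, which is exactly where it is used. If one prefers to avoid the intermediate $S$-$SDD$ vocabulary, the same conclusion can be reached directly: by Propositions \ref{pro3.2} and \ref{pro3.3} the smallest H-eigenvalue of $\mathcal{A}$ lies in the localization set $\Omega^S(\mathcal{A})$ of Theorem \ref{nth2.1}, and the estimate used in the proof of Theorem \ref{th3.1} shows that this eigenvalue cannot be nonpositive (resp.\ cannot be negative), which is positive (semi-)definiteness.
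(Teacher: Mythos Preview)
Your proposal is correct and follows exactly the route the paper indicates: chain Proposition~\ref{pro3.2} (quasi-doubly (strictly) diagonally dominant $\Rightarrow$ $S$-$QDSDD_{(0)}$) with Proposition~\ref{pro3.3} ($S$-$QDSDD_{(0)}$ $\Rightarrow$ $S$-$SDD_{(0)}$) and then invoke Theorem~\ref{th3.1}. The paper itself gives no written proof beyond pointing to these results, so your argument is simply a fleshed-out version of the same idea.
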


\begin{corollary} \label{npro2.3} An even-order
$S$-$QDSDD$  tensor with all positive diagonal entries is positive
definite. And an even-order  $S$-$QDSDD_0$  symmetric tensor with
all nonnegative diagonal entries is positive semi-definite.
\end{corollary}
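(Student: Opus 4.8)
The plan is to derive Corollary~\ref{npro2.3} directly by composing Proposition~\ref{pro3.3} with Theorem~\ref{th3.1}; no new estimates are needed. First I would let $\mathcal {A}=(a_{i_1\cdots i_m})\in R^{[m,n]}$ with $n\geq 2$ be an even-order symmetric $S$-$QDSDD$ tensor, for some nonempty proper subset $S$ of $N$, with $a_{k\cdots k}>0$ for all $k\in N$. By Proposition~\ref{pro3.3}, being $S$-$QDSDD$ forces $\mathcal {A}$ to be an $S$-$SDD$ tensor, i.e., the strict inequalities (\ref{eq2.5.2}) and (\ref{neq2.5.3}) hold for each $i\in S$ and each $j\in \bar{S}$. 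Since $\mathcal {A}$ is in addition even-order, symmetric, and has all positive diagonal entries, Theorem~\ref{th3.1} applies and yields that $\mathcal {A}$ is positive definite.

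For the second statement I would run the same argument with the non-strict hypotheses: if $\mathcal {A}$ is an even-order symmetric $S$-$QDSDD_0$ tensor with $a_{k\cdots k}\geq 0$ for all $k\in N$, then Proposition~\ref{pro3.3} gives that $\mathcal {A}$ is an $S$-$SDD_0$ tensor, whence the semi-definite case of Theorem~\ref{th3.1} gives that $\mathcal {A}$ is positive semi-definite.

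The only point that needs care --- and it is routine --- is that the three standing hypotheses of Theorem~\ref{th3.1} (even order, symmetry, and the sign condition on the diagonal entries) are preserved when we pass from the $S$-$QDSDD$ ($S$-$QDSDD_0$) class to the $S$-$SDD$ ($S$-$SDD_0$) class via Proposition~\ref{pro3.3}; this is immediate because that implication involves only the off-diagonal row sums $r_i(\mathcal {A})$, $r_i^j(\mathcal {A})$, $r_i^{\Delta^{S}}(\mathcal {A})$, $r_i^{\overline{\Delta^{S}}}(\mathcal {A})$ and leaves the order, the symmetry, and the diagonal signs untouched. Hence the corollary follows, and it indeed recovers the results of~\cite{Di,Li,Li1,Zh} cited just above.
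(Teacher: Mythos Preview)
Your proposal is correct and matches the paper's own justification: the paper does not give a separate proof but simply states that Corollaries~\ref{npro2.1}--\ref{npro2.3} follow from Theorem~\ref{th3.1} together with Propositions~\ref{pro3.1}--\ref{pro3.3}, and for Corollary~\ref{npro2.3} this amounts precisely to composing Proposition~\ref{pro3.3} with Theorem~\ref{th3.1} as you do. Your added remark that the symmetry hypothesis (needed for Theorem~\ref{th3.1}) should also be assumed in the positive-definite clause is well taken.
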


\begin{example}  {\rm  Let $\mathcal {A}=(a_{ijkl})\in R^{[4,3]}$ be a real symmetric tensor with elements
defined as follows: \[a_{1111} =5,a_{2222} =6,a_{3333} =3.3,
~a_{1112}=-0.1,~a_{1113}=0.1,~a_{1122}=-0.2,\]
\[a_{1123}=-0.2,a_{1133}=0,~a_{1222}=-0.1,~a_{1223}=0.3,~a_{1233}=0.1,\]
\[a_{1333}=-0.1,~a_{2223}=0.1,~a_{2233}=-0.1,~a_{2333}=0.2.\]
Let $S=\{1,2\}$. Obviously $\bar{S}=\{3\}$. By computations, we get
that
 \[|a_{1111}|\left(|a_{3333}|-r_3^1(\mathcal {A}) \right)
=-0.5000< 0.3800=|a_{3111}| r_1(\mathcal {A}).\] Hence, $\mathcal
{A}$ is not an  $S$-$QDSDD_0$ tensor, consequently, not a strictly
diagonally dominant symmetric tensor or a quasi-doubly strictly
diagonally dominant tensor, and hence we can not use Corollary \ref
{npro2.1}, Corollary \ref {npro2.2} or Corollary \ref {npro2.3} to
determine the positiveness of $\mathcal {A}$. However, it is easy to
get that
\[|a_{1111}|\left( |a_{3333}|-r_3^{\overline{\Delta^{S}}}(\mathcal {A})
\right)=4.0000>3.8000=r_1(\mathcal{A})r_3^{\Delta^{S}}(\mathcal
{A}),\]
\[|a_{3333}|\left( |a_{1111}|-r_1^{\overline{\Delta^{\bar{S}}}}(\mathcal {A})
\right)=4.2900>0.3500=r_3(\mathcal{A})r_1^{\Delta^{\bar{S}}}(\mathcal{A}),\]

\[|a_{2222}|\left( |a_{3333}|-r_3^{\overline{\Delta^{S}}}(\mathcal {A})
\right)=4.8000>4.5000=r_2(\mathcal{A})r_3^{\Delta^{S}}(\mathcal{A})\]
and
\[|a_{3333}|\left( |a_{2222}|-r_2^{\overline{\Delta^{\bar{S}}}}(\mathcal {A})
\right)=5.6100>0.7000=r_3(\mathcal{A})r_2^{\Delta^{\bar{S}}}(\mathcal{A}),\]
i.e., $A$ is an  $S$-$SDD$ tensor. By Theorem \ref{th3.1}}, $A$ is
positive definite. \end{example}

\section*{Acknowledgements}
This work is supported by National Natural Science Foundations of
China (11361074), Natural Science Foundations of Yunnan Province
(2013FD002) and IRTSTYN.







\end{document}